\newtheorem{theorem}{Theorem}[section]
\newtheorem{lemma}[theorem]{Lemma}
\newtheorem{proposition}[theorem]{Proposition}
\theoremstyle{remark}
\newtheorem{remark}[theorem]{Remark}
\newcommand{\mc}[1]{{\mathscr #1}}
\newcommand{\mb}[1]{{\mathbf #1}}
\newcommand{\bb}[1]{{\mathbb #1}}
\newcommand{\PP}{\mathbb{P}}
\newcommand{\EE}{\mathbb{E}}
\newcommand{\RR}{\mathbb{R}}
\newcommand{\NN}{\mathbb{N}}
\newcommand{\ZZ}{\mathbb{Z}}
\newcommand{\YY}{\mathcal Y}
\newcommand{\xxi}{\overline{\xi}}
\newcommand{\norm}[1]{\Vert #1 \Vert}
\newcommand{\ind}[1]{\textbf{1}_{#1}}
\newcommand{\lrp}[1]{\left(#1\right)}
\newcommand{\lrc}[1]{\left[#1\right]}
\newcommand{\lrch}[1]{\left\{ #1\right\} }
\newcommand{\lrb}[1]{\left\langle #1 \right\rangle}
\begin{document}

\title{Symmetric exclusion as a random environment: invariance principle}

\author{Milton Jara}
\address{Instituto de Matem\'atica Pura e Aplicada, Estrada Dona Castorina 110, 22460-320  Rio de Janeiro, Brazil.}  \email{mjara@impa.br}

\author{Ot\'avio Menezes}
\address{\noindent Centro de An\'alise Matem\'atica, Geometria e Sistemas Din\^amicos\\
	Instituto Superior T\'ecnico\\
	Av. Rovisco Pais, 1049-001 Lisboa, Portugal.}
\email{otavio.menezes@tecnico.ulisboa.pt}

\begin{abstract}
	We establish an invariance principle for a one-dimensional random walk in a dynamical random environment given by a speed-change exclusion process. The jump probabilities of the walk depend on the configuration of the exclusion in a finite box around the walker. The environment starts from equilibrium. After a suitable space-time rescaling, the random walk converges to a sum of two independent processes, a Brownian motion and a Gaussian process with stationary increments.
\end{abstract}

\subjclass[2010]{Primary 60K37, secondary 60K35}
\keywords{random walk, dynamic random environment, fluctuations, entropy estimate}

\maketitle

\tableofcontents

\section{Introduction}

This paper establishes an invariance principle for a family of random walks in dynamical random environments (RWDRE) on $\ZZ$ introduced in \cite{afjv}. 
In \cite{afjv}, the authors prove a law of large numbers for the random walk and for the environment as seen by the walker. 
The article \cite{ajv2017} proves the corresponding large deviations principle.
Our article completes the picture by proving an invariance principle.
We define the model in Section \ref{notation_and_results}. For now, a good picture to keep in mind is that of a random walk on top of a simple symmetric exclusion process.
The walker moves according to the following rule: after waiting an exponentially distributed random time, it flips a coin.
If the coin comes up heads, the walker jumps either to its left or its right neighbour, with the same probability; if the coin comes up tails, the walker checks if there is a particle beneath it. 
If there is, he jumps to its left neighbour, and if there is not he jumps to its right neighbour. We prove that, after a proper rescaling of time, space and waiting rates of the walker, its trajectory looks like the sum of a Brownian motion and an independent Gaussian process of stationary increments. For certain choices of the parameters, the limiting Gaussian process is a fractional Brownian motion of Hurst parameter $ \frac{3}{4} $.

Our article fits into two niches in the current probabilistic literature: random walks on dynamical random environments (RWDRE) and scaling limits of interacting particle systems.  
The symmetric exclusion in \cite{avena2009large} was introduced as an example of dynamical random environment with slowly decaying time correlations. 
This followed a series of works dealing with random walks on so-called ``fast mixing'' environments. 
These are models where, in some sense, the environment refreshes itself after a finite (but maybe random) number of jumps of the walk. 
In this setting, one expects the walk to behave as if the environment were deterministic.  That is, a law of large numbers holds, fluctuations around the limit are Gaussian and large deviation probabilities decay exponentially fast. See \cite{avenaphd} for an overview. 
Fast mixing environments are opposite, in a sense, to static environments, where the (random) transition kernel for the walk at each site does not change in time. 
In the static scenario, the walk can get trapped for a long time in small regions, leading to a rich phenomenology. For instance, it can present subdiffusive behavior and polynomial decay of the large deviation probabilities, see \cite{zeitouni2009random}.
In the fast mixing scenario, the traps dissolve before the walk can get stuck for too long. What happens in the middle? This question motivated the study of symmetric exclusion as a random environment, as well as of a couple of other conservative interacting particle systems, see \cite{avena2012continuity}, \cite{bethuelsen2016contact}, \cite{huveneers2015random}, \cite{hilario2015random}, \cite{BloHilTei2018}. The goal of these works is to prove laws of large numbers, central limit theorems and large deviation principles, and most results hold only in a subset of the space of parameters. 
Simulations reported in \cite{avena2012continuity} indicate that trapping may happen when the dynamical random environment is the one-dimensional exclusion processes, indicating that the random walk should have anomalous scaling on some region of parameters.

In the same direction, we mention the recent works \cite{cooling1} and \cite{cooling2}, that analyse a new family of random environments interpolating between static and fast mixing.

The model introduced in \cite{afjv} plays with the idea of slow mixing in a different way. Let $n$ be a scaling parameter, that will be sent to $\infty$. When the environment is given by the symmetric exclusion process, it is reasonable to introduce a diffusive space-time scaling $x \mapsto \frac{x}{n}$, $t \mapsto tn^2$. Under this scaling, the evolution of the exclusion process satisfies a law of large numbers (the so-called {\em hydrodynamic limit}) and a central limit theorem. In \cite{afjv} the exponential clock of the random walk is slowed down by a factor $\frac{\lambda}{n}$, where $\lambda>0$. Then, at least heuristically, between two jumps of the random walker the environment achieves local equilibrium in a region of size $\sqrt n$ around the walker, which is exactly the size at which fluctuations appear. Therefore, the walker should see a randomly evolving equilibrium of the environment process. This heuristics can be made rigorous by means of the formalism of hydrodynamic limits of interacting particle systems, which yields laws of large numbers \cite{afjv} and large deviation principles \cite{ajv2017}.

In this article we show a central limit theorem for the random walk under the scaling introduced in \cite
{afjv}, assuming the dynamic random environment is stationary in time. The scaling limit is then a mixture of two independent Gaussian processes: a Brownian motion and a process with stationary increments introduced in \cite{slafips} as the scaling limit of the occupation time of the origin in the weakly asymmetric exclusion process. The role of the weak asymmetry in \cite{slafips} is played here by the asymptotic speed of the random walk. When the asymptotic speed is zero, the additional Gaussian process corresponds to a fractional Brownian motion of Hurst exponent $H = 3/4$. Up to our knowledge, no previous work has been able to obtain an anomalous (superdiffusive in our case) scaling limit for a random walk in dynamical random environment.

From the hydrodynamic limits side, we compute the scaling limit of an additive functional without explicit knowledge of the invariant measures. 
On our way to obtain this result we prove an estimate on the relative entropy between the environment process at time $t$ and a product measure, using a modification of Yau's Relative Entropy method, introduced in \cite{yau1991relative}. 
This method is nowadays a standard tool for proving hydrodynamic limits. However, the current state of the art only yields a bound of order $o\big( \tfrac{t}{n}\big)$. This bound is enough to derive a law of large numbers and also a large deviations principle, but it is far from what is required in order to prove a central limit theorem.
Our main technical innovation is the derivation of a bound of order $\mc O(t)$, obtained with a different implementation of the Relative Entropy method, which is of independent interest.

Our result can be viewed as a variation on the problem of the tagged particle. 
The seminal article on this problem is \cite{kipnis1986central}, where a 
powerful method for establishing scaling limits of tagged particles was 
introduced. The method considers the \emph{environment as seen from the 
particle}, $\xi_t(x):=\eta_t(x+x_t)$ ($\eta_t$ is the particle system and $x_t$ 
is the tagged particle) and writes the position of the tagged particle as a 
martingale plus an additive functional. The 
martingale part can be handled by the Martingale Functional Central Limit 
Theorem (MFCLT), see Theorem \ref{t2}. The problem reduces, therefore, to 
studying the scaling limit of the additive functional. The work 
\cite{kipnis1986central} gives sufficient conditions to approximate this 
additive functional by a martingale, thus establishing Brownian motion as the 
scaling limit of the tagged particle. We point the reader to 
\cite{komorowski2012fluctuations} for a comprehensive exposition of the 
martingale approximation technique, and to \cite{avena2012symmetric} for an 
application in RWRE. In our model, the additive 
functional does not converge to Brownian motion, but to a singular functional of 
the density fluctuation field associated to the environment process. This 
functional turns out to be identical to the scaling limit of the occupation time 
of the origin of a stationary, weakly asymmetric exclusion process. The problem 
of the asymptotic behavior of the occupation time was already considered in the 
60's \cite{Por} in the case of independent particles and generalized to the case 
of interaction by branching, see \cite{HolStr} and the references therein. 
However, apart from dynamics which can be handled with duality techniques, the interacting case was open until \cite{slafips}. In 
this article we follow the approach of \cite{slafips}, adapted to deal with the 
lack of knowledge of the invariant measure of the environment process.

\section{Notation and results}\label{notation_and_results}

\subsection{A warm-up example}

Let $(\eta_t)_{t \geq 0}$ be the simple symmetric exclusion process (SSEP) on $\bb Z$, namely the Markov process that takes values in $\{0,1\}^{\bb Z}$ and is generated by the operator
\[
L^{ex} f (\eta) = \sum_{x \in \bb Z} \lrc{f(\eta^{x,x+1}) - f(\eta)},
\]
where $f: \{0,1\}^{\bb Z}$ is a local function and $\eta^{x,x+1}$ is obtained from $\eta$ by interchanging the values of $\eta(x)$ and $\eta(x+1)$. 
Let $\rho \in (0,1)$ and let $\nu_\rho$ denote the Bernoulli product measure in $\{0,1\}^{\bb Z}$. We assume that $\eta_0$ has law $\nu_\rho$. In that case, the law of $\eta_t$ is $\nu_\rho$ for any $t \geq 0$. 

The process $(\eta_t)_{t \geq 0}$ will serve as a dynamical random environment for a random walk that we will define now. Let $n \in \bb N$ be a scale parameter and let $\eta_t^n = \eta_{tn^2}$ be the SSEP with a diffusive speeding-up. Let $\alpha, \beta \geq 0$ be such that $\alpha+\beta >0$. Let $(x_t^n)_{t \geq 0}$ be the time-inhomogeneous chain with the following dynamics: the chain waits an exponential time of rate $n$, at the end of which it jumps to one of its two neighbors. To make its choice, it looks at the value $\eta_t^n(x)$ of the SSEP at its current location $x$. If $\eta_t^n(x)=1$, the chain jumps to the right with probability $\frac{\alpha}{\alpha+\beta}$ and to the left with probability $\frac{\beta}{\alpha+\beta}$. If $\eta_t^n(x) = 0$, the probabilities are reversed: the chain jumps to its right with probability $\frac{\beta}{\alpha+\beta}$ and to its left with probability $\frac{\alpha}{\alpha+\beta}$.

The process $(x_t^n)_{t \geq 0}$ obtained in this way is called a {\em random walk in dynamic random environment}. In \cite{afjv}, the authors proved that
\[
\lim_{n \to \infty} \frac{x_t^n}{n} = v(\rho) t,
\]
where $v = (2\rho -1 ) \frac{\alpha-\beta}{\alpha+\beta}$, that is, a law of large numbers for the random walk $(x_t^n)_{t \geq 0}$. The corresponding large deviations principle has been proved in \cite{avena2009large}. Our main goal is to prove the corresponding central limit theorem: we will prove that
\[
\lim_{n \to \infty} \frac{x_t^n - v(\rho) t n}{\sqrt n} = (\alpha+\beta) B_t + Z_t,
\]
where $(B_t)_{t \geq 0}$ is a standard Brownian motion and $(Z_t)_{t \geq 0}$ is a Gaussian process with stationary increments, independent of $(B_t)_{t \geq 0}$.

\begin{remark}
The variance of the process $(Z_t)_{t \geq 0}$ can be explicitly computed. It corresponds, modulo a proper choice of constants, to the process appearing in Theorem 6.3 of \cite{slafips}.
\end{remark}

\subsection{General setting}

Let $\Omega = \{0,1\}^{\bb Z}$. For $x \in \bb Z$ let $\tau_x: \Omega \to \Omega$ denote the canonical shift: $\tau_x \eta(y) = \eta(x+y)$ for any $\eta \in \Omega$ and any $y \in \bb Z$.  We say that the {\em support} of $f$ is {\em contained} in a set $A \subseteq \bb Z$ if $f(\eta) = f(\xi)$ whenever $\eta(x) =\xi(x)$ for every $x \in A$. We say that $f$ is a \emph{local function} if its support is contained in some finite set. Let $c: \Omega \to [0,\infty)$ satisfy
\begin{itemize}
\item[i)] {\em Finite range:} $c(\cdot)$ is a local function;
\item[ii)] {\em Ellipticity:} There exists $\epsilon_0>0$ such that $c(\eta) \geq \epsilon_0$ for any $\eta \in \Omega$;
\item[iii)] {\em Reversibility: } $c(\eta) =c(\xi)$ whenever $\eta(x) = \xi(x)$ for all $x \neq 0,1$, that is, the support of $c(\cdot)$ is contained in $\bb Z \setminus \{0,1\}$.
\end{itemize}

Let $c_x:\Omega \to \bb R$ be defined as $c_x(\eta) = c(\tau_x \eta)$ for any $\eta \in \Omega$. For $f: \Omega \to \bb R$ local, let $L_b f: \Omega  \to \Omega$ be defined as 
\[
L_b f(\eta) = \sum_{x \in \bb Z} c_x(\eta) \lrc{f(\eta^{x,x+1}-f(\eta))}
\]
where $\eta^{x,x+1}$ is defined as
\[
\eta^{x,x+1}(z) =
\left\{
\begin{array}{c@{\;;\;}l}
\eta(x+1) & z=x,\\
\eta(x) & z=x+1,\\
\eta(z) & z \neq x,x+1.
\end{array}
\right.
\]
Since $f$ is local, only a finite number of terms in the sum defining $L_b f$ are non-zero.

\textbf{The {\em lattice gas} with interaction rate $c(\cdot)$ is the Markov process $(\eta_t)_{t \geq 0}$ defined in $\Omega$ and generated by the operator $L_b$.} Notice that the SSEP corresponds to the choice $c \equiv 1$. 

For $\rho \in [0,1]$, let $\nu_\rho$ be the Bernoulli product measure in $\Omega$: for any $x_1,\dots,x_\ell \in \bb Z$,
\[
\nu_\rho\lrch{\eta(x_1)=\cdots=\eta(x_\ell)=1} = \rho^\ell.
\]
 Thanks to the reversibility condition iii), these measures are invariant under the evolution of $(\eta_t)_{t \geq 0}$. From now on, we fix $\rho \in (0,1)$ and we assume that $\eta_0$ (and therefore $\eta_t$ for any $t \geq 0$) has law $\nu_\rho$.

\bigskip 

 Let $\mc R \subseteq \bb Z \setminus \{0\}$ be a finite set. For each $z \in \mc R$, let $r_z: \Omega \to [0,\infty)$ be a local function. Let $n \in \bb N$ be a scaling parameter and let $(\eta_t^n)_{t \geq 0}$ be the lattice gas defined above, speeded up by $n^2$, that is, $\eta_t^n = \eta_{tn^2}$. We denote by $\bb P_n$ the law of $(\eta_t^n)_{t \geq 0}$ and we denote by $\bb E_n$ the expectation with respect to $\bb P_n$. For $x \in \bb Z$ and $z \in \mc R$, define $r_z(\cdot,x): \Omega \to [0,\infty)$ as $r_z(\eta, x) = r_z(\tau_x \eta)$ for any $\eta \in \Omega$. 

\textbf{We define the process $(x_t^n)_{t \geq 0}$ as the random walk that jumps from $x$ to $x+z$ with instantaneous rate $n\, r_z(\eta_t^n, x)$.} The pair $\{(\eta_t^n, x_t^n); t \geq 0\}$ turns out to be a Markov process, generated by the operator
\[
\begin{split}
L_n f(\eta,x) 
		&= n^2 \sum_{y \in \bb Z} c_y(\eta) \lrc{ f(\eta^{y,y+1},x)-f(\eta,x)} \\
		&\quad \quad + n \sum_{z \in \mc R} r_z(\eta,x) \lrc{ f(\eta,x+z)-f(\eta,x)}.
\end{split}
\]
Define 
\begin{equation}\label{asymptotic_speed}
v(\rho) = \int \sum_{z \in \mc R} z r_z \,d \nu_\rho.
\end{equation}
Let us denote by $\mc D([0,T], \bb R)$ the space of c\`adl\`ag, real-valued trajectories.
The following result was proved in \cite{afjv}:

\begin{proposition}\label{lln}
	 Let $ v(\rho) $ be as in \eqref{asymptotic_speed}. Then, for any $T >0$, 
\label{p1}
\[
\lim_{n \to \infty} \frac{x_t^n}{n} = v(\rho)\cdot t
\]
in law with respect to the $J_1$-Skorohod topology of $\mc D([0,T],\bb R)$.
\end{proposition}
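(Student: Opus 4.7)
The plan is to decompose $x_t^n$ into a martingale plus an additive functional via Dynkin's formula applied to $F(\eta,x)=x$. Localizing at the stopping times $\inf\{t:|x_t^n|\geq k\}$ and letting $k\to\infty$ gives
\[
x_t^n = x_0^n + n\int_0^t V(\tau_{x_s^n}\eta_s^n)\, ds + M_t^n,
\]
where $V(\xi):=\sum_{z\in\mc R} z\, r_z(\xi)$ and $M_t^n$ is a c\`adl\`ag martingale with predictable bracket $\lrb{M^n}_t = n\int_0^t\sum_{z\in\mc R} z^2\, r_z(\tau_{x_s^n}\eta_s^n)\, ds \leq Cnt$, since each $r_z$ is a bounded local function. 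Doob's $L^2$ inequality then gives $\EE_n[\sup_{s\leq T}(M_s^n/n)^2]\leq 4CT/n\to 0$, so after dividing by $n$ only the additive functional survives.

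The core step is to show
\[
\int_0^t \big(V(\tau_{x_s^n}\eta_s^n) - v(\rho)\big)\, ds \xrightarrow{n\to\infty} 0 \quad \text{in probability.}
\]
Because $\nu_\rho$ is invariant for the lattice gas, $\eta_s^n\sim\nu_\rho$ at each fixed $s$, and by translation invariance $\int V(\tau_x\,\cdot)\, d\nu_\rho = v(\rho)$ for every deterministic $x$. Since $x_s^n$ is not independent of $\eta_s^n$, one cannot simply take expectations; instead I would appeal to a replacement lemma
\[
\limsup_{n\to\infty}\EE_n\!\left[\left|\int_0^t\Big(V(\tau_{x_s^n}\eta_s^n)-\widetilde V\big(\bar\eta_s^{\epsilon n}(x_s^n)\big)\Big)\, ds\right|\right] = o_\epsilon(1),
\]
where $\bar\eta_s^k(y):=\tfrac{1}{2k+1}\sum_{|z|\leq k}\eta_s^n(y+z)$ and $\widetilde V(u):=\int V\, d\nu_u$ with $\widetilde V(\rho)=v(\rho)$. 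A standard one-block estimate for the lattice gas on boxes of size $\epsilon n$, combined with the fact that the initial law has zero relative entropy with respect to $\nu_\rho$ (and hence $H(\text{law of }\eta_t^n\,|\,\nu_\rho)\equiv 0$ by invariance), yields this replacement; the hydrodynamic limit from equilibrium then gives $\bar\eta_s^{\epsilon n}(x_s^n)\to\rho$ in probability, and continuity of $\widetilde V$ at $\rho$ closes the estimate as $\epsilon\downarrow 0$.

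For path-space convergence, jumps of $x_t^n/n$ have size at most $(\max_{z\in\mc R}|z|)/n$ and the drift $V$ is bounded, so Aldous--Rebolledo criteria give tightness of $(x_\cdot^n/n)$ in $\mc D([0,T],\bb R)$. Since the candidate limit $t\mapsto v(\rho)t$ is continuous, convergence of finite-dimensional distributions upgrades automatically to $J_1$-Skorohod convergence.

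The main obstacle is the replacement step. In the classical tagged-particle setting of \cite{kipnis1986central}, the invariant measure of the environment as seen from the particle is again $\nu_\rho$, making replacement an immediate consequence of the ergodic theorem. Here the walker's rates $r_z$ generally break this structure: $\nu_\rho$ is \emph{not} invariant for the shifted process $(\tau_{x_t^n}\eta_t^n)_{t\geq 0}$, and its true invariant measure is unknown. The argument therefore has to exploit the diffusive separation of scales ($n^2$ for the environment versus $n$ for the walker), so that the environment reaches local equilibrium between consecutive walker jumps.
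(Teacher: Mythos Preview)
The paper does not prove this proposition: it is quoted from \cite{afjv}, so there is no proof here to compare against directly. Your decomposition into martingale plus additive functional is exactly the one the paper uses later (see \eqref{decomposition}), and the martingale part indeed vanishes after division by $n$ by the quadratic-variation bound you give.

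The gap is in the replacement step. You justify it by saying that $H(\text{law of }\eta_t^n\mid\nu_\rho)\equiv 0$, but this is the entropy of the \emph{unshifted} environment, and it is not the relevant quantity. The function you are averaging is $V(\tau_{x_s^n}\eta_s^n)=V(\xi_s^n)$, so the measure you must control is $\mu_s^n=\text{law}(\xi_s^n)$, the environment as seen from the walker. As you yourself note in your final paragraph, $\nu_\rho$ is not invariant for $(\xi_t^n)_{t\geq 0}$, and $H(\mu_s^n\mid\nu_\rho)$ is not zero. A one-block estimate at the (random, environment-dependent) location $x_s^n$ cannot be run using only the trivial entropy of $\eta_s^n$; the random shift is correlated with the configuration, and this is precisely where the difficulty lies.

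What \cite{afjv} actually does is prove the bound $H(\mu_t^n\mid\nu_\rho)\leq Cnt$ (cf.~the remark after Theorem~\ref{t3} in the present paper). That $O(n)$ bound is enough to run the usual one-block/two-block machinery at the LLN scale and to conclude $\int_0^t(V(\xi_s^n)-v(\rho))\,ds\to 0$. Your sketch identifies the obstacle correctly at the end but does not supply this ingredient; the appeal to ``diffusive separation of scales'' is the right intuition, but its rigorous content is exactly the entropy production estimate for $(\xi_t^n)_{t\geq 0}$, which you have not established.
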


Notice that Proposition \ref{p1} can be interpreted as a law of large numbers for the random walk.
In this article we will prove the corresponding central limit theorem:

\begin{theorem}
\label{t1}
For any $T >0$,
\[
\lim_{n \to \infty} \frac{x_t^n - v(\rho) t n}{\sqrt n} = \sigma B_t + Z_t,
\]
in law with respect to the $J_1$-Skorohod topology of $\mc D([0,T],\bb R)$. In the above display,
\begin{equation}\label{sigma}
\sigma^2 =\int  \sum_{z \in \mc R} z^2 r_z d \nu_\rho
\end{equation}
and $(Z_t)_{t \geq 0}$ is a Gaussian process of stationary increments, independent from $(B_t)_{t \geq 0}$. 
\end{theorem}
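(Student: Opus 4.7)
The plan is to follow the martingale approach of Kipnis and Varadhan. Setting $V(\eta,x) := \sum_{z \in \mc R} z\, r_z(\eta,x) - v(\rho)$, so that $L_n x = n \sum_{z \in \mc R} z\, r_z(\eta,x)$, one obtains the Dynkin decomposition
\[
\frac{x_t^n - v(\rho) t n}{\sqrt{n}} = \frac{M_t^n}{\sqrt{n}} + \sqrt{n} \int_0^t V(\eta_s^n, x_s^n)\, ds,
\]
where $M^n$ is the compensated jump martingale, with predictable bracket $\langle M^n\rangle_t = n \int_0^t \sum_{z \in \mc R} z^2 r_z(\eta_s^n, x_s^n)\, ds$.

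\textbf{Martingale part.} First I would apply the martingale functional central limit theorem to $M^n / \sqrt{n}$. Stationarity of $\nu_\rho$, together with the ergodic theorem for the environment seen from the walker (already used to prove Proposition \ref{lln}), should give $\langle M^n\rangle_t / n \to \sigma^2 t$ in probability. Since the jumps of $M^n / \sqrt{n}$ are of order $\mc O(1/\sqrt n)$, the Lindeberg condition is automatic and MFCLT produces the Brownian component $\sigma B$.

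\textbf{Additive functional.} The main work is showing that $A_t^n := \sqrt{n} \int_0^t V(\eta_s^n, x_s^n)\, ds$ converges to a Gaussian process $Z$ of stationary increments, independent of $B$. I would pass to the environment seen from the walker $\xi_s^n := \tau_{x_s^n} \eta_s^n$, so that $A_t^n = \sqrt{n} \int_0^t U(\xi_s^n)\, ds$ with $U := \sum_{z \in \mc R} z\, r_z - v(\rho)$ local and $\nu_\rho$-centered. A Boltzmann--Gibbs type replacement lemma should then substitute $U(\xi_s^n)$ by a linear functional $\kappa(\rho) \sum_y a(y) [\xi_s^n(y) - \rho]$, for a compactly supported weight $a$ and a constant $\kappa(\rho)$ related to $\partial_\rho v(\rho)$. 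After this replacement, $A_t^n$ is essentially a weighted occupation-time functional of the density fluctuation field of the environment, of the form analyzed in \cite{slafips} for the occupation time at the origin of a weakly asymmetric exclusion process. Convergence to the limit $Z$, together with its independence from $B$ (arising from the orthogonality of the jump martingale and the density field at the level of quadratic covariations), should then follow by reproducing the tightness and finite-dimensional arguments of that work.

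\textbf{Main obstacle.} The principal difficulty is that, unlike in the tagged particle problem, the product measure $\nu_\rho$ is \emph{not} invariant for the environment as seen from the walker: the jump rates $r_z$ bias the occupation variables in a neighborhood of the origin, and the joint process has no explicit stationary distribution. Consequently the usual spectral gap and $H_{-1}$ inputs that underpin the Boltzmann--Gibbs replacement are not directly available. The paper's remedy, previewed in the introduction, is to prove the sharp relative entropy bound
\[
H\bigl( \mu_t^n \mid \nu_{\rho(\cdot,t)}^n \bigr) = \mc O(t),
\]
where $\mu_t^n$ denotes the law at time $t$ of $(\xi_s^n)$ and $\nu_{\rho(\cdot,t)}^n$ is a product reference measure with a slowly varying profile tuned to absorb the local bias induced by the walker. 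This improves on the $o(t/n)$ bound delivered by the classical Yau scheme and is exactly what is needed, through the entropy inequality, to transport Gaussian estimates for $\sqrt n \int_0^t U\, ds$ from the product reference measure (where they can be obtained by duality or direct $L^2$ computation) to the true joint law. Establishing this $\mc O(t)$ entropy estimate through a modified relative entropy method is, in my plan, the step that demands the most care.
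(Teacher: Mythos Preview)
Your plan is correct and matches the paper's approach almost step for step: the same Dynkin decomposition into martingale plus additive functional, MFCLT for the martingale part, a replacement (Boltzmann--Gibbs) lemma reducing the additive functional to a linear statistic of the fluctuation field followed by the analysis of \cite{slafips}, and independence via orthogonality of the walker's jump martingale with Dynkin martingales built from the environment alone. Two small corrections: the sharp entropy bound is taken with respect to the \emph{constant} product measure $\nu_\rho$ (not a slowly varying profile), i.e.\ $H(\mu_t^n\mid\nu_\rho)\leq Ct$, and it is fed into the proof not by transporting $L^2$ estimates but through Feynman--Kac variational inequalities of the form $\log\bb E_n[e^{\int_0^t V}]\leq t\sup_f\{\langle V,f\rangle-\epsilon_0 n^2\mc D(\sqrt f)+C\}$.
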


\subsection{The environment process}

A classical idea in the context of random walks in random environments is to consider the {\em environment as seen by the random walk}. Here we follow the approach of \cite{kipnis1986central}. The process $(\xi_t^n)_{t \geq 0}$ with values in $\Omega$, defined as
\[
\xi_t^n(x) = \eta_t^n(x + x_t^n) \text{ for any } x \in \bb Z \text{ and any } t \geq 0
\]
is a Markov process generated by the operator $L_n = n^2 L_b+  nL^{rw}$, where
\[
L^{rw} f( \xi) = \sum_{z \in \mc R} r_z(\xi) \big( f(\tau_z \xi)-f(\xi)\big).
\]

The process $(x_t^n)_{t \geq 0}$ can be recovered from $(\xi_t^n)_{t \geq 0}$ as follows: for each $z \in \mc R$, let $N_t^{z,n}$ be the number of shifts in direction $z$ the process $(\xi_t^n)_{t \geq 0}$ has performed up to time $t$. On one hand,
\[
x_t^n = \sum_{z \in \mc R} z N_t^{z,n},
\]
and on the other hand, $(N_t^{z,n})_{t \geq 0}$ is a (time-inhomogeneous) Poisson process of rate $n \,r_z(\xi_t^n)$. Therefore,
\[
M_t^{z,n} := \frac{1}{\sqrt n} N_t^{z,n} - \sqrt n \int_0^t r_z(\xi_s^n) ds
\]
is a martingale with respect to the filtration $ \mc F_t =\sigma \{\xi^n_s:s\leq t\}$. Its predictable quadratic variation is given by
\[
\lrb{M_t^{z,n}} = \int_0^t r_z(\xi_s^n)ds.
\]
Moreover, since the jumps of these Poisson processes are disjoint, these martingales are mutually orthogonal.
\bigskip 

Adding the martingales $(M^{n,z}_t)_{t\geq 0}$, we can write the position of the random walk as a sum of a martingale and an integral term, namely
\begin{equation}\label{decomposition}
\frac{x_t^n - v(\rho) n t}{\sqrt n} = M_t^n + A_t^n ,
\end{equation}
where $M^n_t:=\sum_{z\in\mc R}M^{n,z}_t$ and 
\begin{equation}\label{gato}
A_t^n = \sqrt n \int_0^t \big(\omega(\xi_s^n)-v(\rho)\big) ds, \mbox{ with }
\end{equation}
$\omega(\xi) := \sum_{z \in \mc R} z\, r_z(\xi)$. Besides, 

\[
\lrb{M_t^n} = \int_0^t \sum_{z \in \mc R} z^2 r_z(\xi_s^n) ds.
\]
The process $(A_t^n)_{t \geq 0}$ is an instance of what is known in the literature as an {\em additive functional} of the chain $(\xi_t^n)_{t \geq 0}$.
Theorem \ref{t1} is an immediate consequence of the following result:

\begin{theorem}
\label{t2}
Consider the decomposition \eqref{decomposition} and recall the definition of $\sigma$ in \eqref{sigma}. Fix $T >0$.
\begin{itemize}
\item[i)] As $n \to \infty$, $\{M_t^n; t \in [0,T]\}_{n \in \bb N}$ converges in law to $(\sigma B_t)_{t \in [0,T]}$ with respect to the $J_1$-Skorohod topology of $\mc D([0,T],\bb R)$;
\item[ii)] as $n \to \infty$, the sequence $\{A_t^n; t \in [0,T]\}_{n \in \bb N}$ converges in law to $\{Z_t :t \in [0,T]\}$ with respect to the $J_1$-Skorohod topology of $\mc D([0,T],\bb R)$;
\item[iii)] The processes $\{B_t :t \in [0,T]\}$ and $\{Z_t :t \in [0,T]\}$ are independent.
\end{itemize}
\end{theorem}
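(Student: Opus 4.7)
The plan splits naturally into the three assertions of Theorem~\ref{t2}, which are of very different natures.

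For part (i), I would invoke the Martingale Functional Central Limit Theorem applied to $M_t^n = \sum_{z \in \mc R} M_t^{z,n}$. The jump-size condition is immediate because every jump of $M_t^n$ is bounded by $(\max_{z \in \mc R}|z|)/\sqrt n \to 0$. The only non-trivial hypothesis is convergence in probability of the predictable quadratic variation $\langle M_t^n \rangle = \int_0^t \sum_{z \in \mc R} z^2 r_z(\xi_s^n)\,ds$ to the deterministic limit $\sigma^2 t$. Since each $r_z$ is a bounded local function, this reduces to a weak law of large numbers for time averages of local functions evaluated on the environment as seen by the walker; it follows from the relative entropy bound announced in the introduction (in fact any $o(n)$ bound is already enough at this level).

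Part (ii) is the heart of the result. I would perform a Boltzmann--Gibbs-type expansion of $\omega(\xi) - v(\rho)$ around equilibrium by choosing a suitable corrector $\Psi$ so that
\[
\omega(\xi) - v(\rho) = v'(\rho)\big(\xi(0)-\rho\big) + L_b \Psi(\xi) + \mc E(\xi),
\]
where $L_b \Psi$ integrates, via Dynkin's formula, to a martingale of size $o(\sqrt n)$ on $[0,T]$, and $\mc E$ is a higher-order remainder that is controlled, after multiplication by $\sqrt n$, using the refined $\mc O(t)$ entropy estimate. The leading linear contribution reduces to $\sqrt n\, v'(\rho) \int_0^t (\xi_s^n(0)-\rho)\,ds$, a weighted occupation time of the walker's trajectory in the density fluctuation field of the environment. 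Coupling to the density field and using the fact that the walker's asymptotic speed $v(\rho)$ plays the role of the weak asymmetry in \cite{slafips}, I would identify the limit as the Gaussian functional appearing in Theorem 6.3 of that reference, yielding $(Z_t)_{t \geq 0}$.

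For part (iii), I would upgrade (i) and (ii) to joint convergence $(M^n, A^n) \to (\sigma B, Z)$ in law; tightness of the pair is automatic from tightness of each component, and the joint finite-dimensional characterization is obtained along the same lines as in parts (i) and (ii). For independence, note that $A_t^n$ is absolutely continuous in $t$, whereas $M_t^n$ is a purely discontinuous martingale, so their quadratic covariation $[M^n, A^n]_t$ vanishes identically for every $n$. Passing to the limit, this forces the Gaussian pair $(\sigma B, Z)$ to be uncorrelated, and hence independent. The main obstacle is plainly part (ii): the prefactor $\sqrt n$ in $A_t^n$ sits at the critical CLT scale, and the classical entropy bound of order $o(t/n)$ is too crude to yield a vanishing error once multiplied by $\sqrt n$; the refined $\mc O(t)$ estimate developed in the paper is calibrated precisely to close the Boltzmann--Gibbs step at this scale, and essentially all of the technical work concentrates there.
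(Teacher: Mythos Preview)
Your argument for part (iii) has a genuine gap. The vanishing of $[M^n,A^n]_t$ is automatic whenever one of the two processes has bounded variation, and it carries no information about correlations: for a Brownian motion $B$ and $A_t=\int_0^t B_s\,ds$ one has $[B,A]\equiv 0$, yet the pair is plainly dependent. There is no mechanism by which $[M^n,A^n]=0$ passes to the limit as ``uncorrelated'', nor have you argued that the joint limit $(\sigma B,Z)$ is Gaussian to begin with. The paper proceeds very differently: via the Replacement Lemma and the law of large numbers for $x^n/n$, it first approximates $A^n_t$ by a functional of the \emph{environment process alone}, and then approximates that functional by a Dynkin martingale $N^{n,\epsilon}_{s,t}$ built solely from the environment dynamics. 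Since $M^n$ jumps only when the walker jumps while $N^{n,\epsilon}$ jumps only when the environment does, these two are genuinely orthogonal \emph{martingales} with converging quadratic variations, and the Martingale FCLT applied to the pair yields independent limits. Your $A^n$ is not a martingale, so the orthogonality you invoke cannot feed into the MFCLT.

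For part (ii) your outline is in the right spirit but deviates from the paper and omits a substantial step. The paper does not introduce a corrector $\Psi$ solving an auxiliary equation; it proves the Replacement Lemma directly from the entropy bound and Dirichlet-form estimates, approximating $\omega(\xi^n_s)-v(\rho)$ by $v'(\rho)\,(\xi^n_s\star\varphi_\epsilon)(0)$ for a smooth mollifier $\varphi_\epsilon$. The single-site quantity $\xi^n_s(0)-\rho$ you wrote would not make sense as a functional of the limiting distribution-valued field, which is why the mollification and a subsequent $\epsilon\to 0$ limit (handled through the results of \cite{slafips}) are needed. Separately, the paper devotes an entire section to tightness of $\{A^n\}$ in $C([0,T];\bb R)$ via a Kolmogorov--Centsov moment bound; your sketch does not address tightness, and this step is not routine---it requires the sharp $\mc O(t)$ entropy estimate together with a trick of subtracting the adjoint term $\psi$ before applying Feynman--Kac.
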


\subsection{Auxiliary results}

As the reader can guess from the statement of Theorem \ref{t2}, we will need distinct tools to tackle the convergence of the martingale part of $x_t^n$ and the additive functional part of $x_t^n$. Fortunately, the machinery for the martingale part can be found in the literature.

\subsubsection{Invariance principle for martingales}

In order to prove convergence of the processes $\{M_t^n; t \geq 0\}_{n \in \bb N}$, as well as the independence of the limiting objects $(B_t)_{t \geq 0}$, $(Z_t)_{t \geq 0}$, we will use the following result:

\begin{proposition}[Martingale CLT] \label{p1}
Let $\{\mc M_t^n, t \in [0,T] \}_{n \in \bb N}$ be a sequence of square-integrable martingales. Assume that:
\begin{itemize}
\item[i)] the sequence of predictable quadratic variation processes $\{\lrb{\mc M_t^n}; t \in [0,T]\}$ converges in law to an increasing function $H:[0,T]\to\bb R$;

\item[ii)] the size of the largest jump of $(\mc M_t^n)_{t \geq 0}$ converges in probability to $0$.
\end{itemize}

Then $\{\mc M_t^n; t \geq 0\}_{n \in \bb N}$ converges in law to a continuous martingale of quadratic variation $H$. 
In addition, let $\{\mc N_t^n; t \geq 0\}_{n \in \bb N}$ be another sequence of square-integrable martingales satisfying i), ii). If $(\mc M^n_t)_{t \geq 0}$ is orthogonal to $(\mc N_t^n)_{t \geq 0}$ for each $n$, then the limiting martingales are independent.
\end{proposition}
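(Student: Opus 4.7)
The plan is to follow the classical Rebolledo/Jacod--Shiryaev strategy for a martingale invariance principle: first establish tightness of $\{\mc M^n\}$ in $\mc D([0,T],\bb R)$, then identify every subsequential weak limit as the unique continuous Gaussian martingale with quadratic variation $H$, and finally handle the independence part by lifting the argument to the two-dimensional process $(\mc M^n,\mc N^n)$.

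First I would prove tightness via Rebolledo's criterion: a sequence of c\`adl\`ag, square-integrable martingales is tight in $\mc D([0,T],\bb R)$ whenever (a) the sequence of predictable quadratic variations is C-tight (as processes) and (b) the size of its largest jump vanishes in probability. Hypothesis (i) gives (a) because the stated limit $H$ is deterministic and continuous, which is the standard way to upgrade convergence in law of $\lrb{\mc M^n}$ to C-tightness at the process level. Hypothesis (ii) is exactly (b), and it also forces every subsequential weak limit of $\mc M^n$ to have continuous paths.

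Next, I would identify any subsequential weak limit $\mc M$ through a martingale problem. Applying It\^o's formula to $f(\mc M_t^n)$ for $f \in C_b^3(\bb R)$ yields
\[
f(\mc M_t^n) - f(0) - \tfrac{1}{2}\int_0^t f''(\mc M_{s-}^n)\, d\lrb{\mc M^n}_s = (\text{martingale}) + R_t^n(f),
\]
with a remainder controlled by $\tfrac{1}{6}\norm{f'''}_\infty \sum_{s \leq t}|\Delta \mc M_s^n|^3$, and hence by $\tfrac{1}{6}\norm{f'''}_\infty (\sup_{s \leq T}|\Delta \mc M_s^n|) \sum_{s \leq t}|\Delta \mc M_s^n|^2$. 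The inner sum has expectation $\EE \lrb{\mc M^n}_T$, uniformly bounded by hypothesis (i), and the factor in front tends to zero in probability by (ii); a uniform integrability argument then yields $\EE|R_t^n(f)| \to 0$. Passing to the weak limit along a convergent subsequence and using (i) once more, every limit $\mc M$ is continuous and satisfies
\[
f(\mc M_t) - \tfrac{1}{2}\int_0^t f''(\mc M_s)\, dH(s) \ \text{is a martingale, for every } f \in C_b^2(\bb R).
\]
Since $H$ is deterministic, this martingale problem is well-posed and its unique solution is the continuous Gaussian martingale with quadratic variation $H$; uniqueness of the limit upgrades subsequential convergence to convergence in law.

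For the last assertion, I would run the same argument on the $\bb R^2$-valued martingale $(\mc M^n, \mc N^n)$: tightness of each coordinate gives tightness of the pair, and orthogonality $\lrb{\mc M^n, \mc N^n} \equiv 0$ is preserved by the two-dimensional martingale problem, whose unique solution is a Gaussian process with diagonal covariance, i.e.\ independent continuous Gaussian martingale coordinates. The delicate point I anticipate is the uniform control of the jump remainder $R^n_t(f)$; once this is in place everything downstream is bookkeeping around the well-posedness of the Gaussian martingale problem with deterministic, continuous quadratic variation.
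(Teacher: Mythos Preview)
The paper does not actually prove this proposition: immediately after the statement it simply cites \cite{whitt2007proofs} for the case $H(t)=\sigma t$ and Chapter VIII.3.a of \cite{jacod2013limit} for general $H$. Your sketch is precisely the Rebolledo/Jacod--Shiryaev route that those references follow, so in spirit you are aligned with what the authors invoke; you are just filling in what they chose to outsource.

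Two small technical caveats in your write-up. First, you use that $H$ is continuous to get C-tightness of $\lrb{\mc M^n}$ and a continuous limit; the proposition as stated only says ``increasing'', so you should either note that continuity is implicit (it is, since the conclusion asserts a continuous limiting martingale) or add it as a standing assumption. Second, the bound on the remainder $R_t^n(f)$ uses that $\EE\lrb{\mc M^n}_T$ is uniformly bounded in $n$; convergence in law of $\lrb{\mc M^n}_T$ to the constant $H(T)$ does not by itself give this, so you need an extra word (e.g.\ a truncation/localization argument, or the observation that in the applications of the paper the quadratic variations are uniformly bounded pathwise). Neither point is a real obstruction, and once patched your argument is the standard one.
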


A proof of this result for the case $H(t) = \sigma t$ can be found in \cite{whitt2007proofs}. The proof for general $H(t)$ can be found in Chapter VIII.3.a of \cite{jacod2013limit}.

\subsubsection{Density fluctuation field}

The process $(A_t^n)_{t \geq 0}$ defined as
\[
A_t^n = \sqrt{n} \int_0^t \big( \omega(\xi_s^n) - v(\rho)\big) ds
\]
is a particular instance of what is called an {\em additive functional} of the Markov process $(\xi_t^n)_{t \geq 0}$. In order to prove the convergence of these additive functionals, we will follow the strategy introduced in \cite{slafips}, that relates additive functionals with the {\em density fluctuation field} of the underlying particle system. In order to describe this field, we need some definitions. 

Let $\mc S(\bb R)$ be the Schwarz space of test functions in $\bb R$. For $f \in \mc S(\bb R)$, $n \in \bb N$ and $t \geq 0$, let 
\begin{equation}\label{fluctuation_field}
X_t^n(f) := \frac{1}{\sqrt n} \sum_{x \in \bb Z} \big(\eta_t^n(x) -\rho\big) f\big(\tfrac{x}{n}\big).
\end{equation}
By duality, this relation defines a process $(X_t^n)_{t \geq 0}$ with values in the space $\mc S'(\bb R)$ of tempered distributions. This is the {\em density fluctuation field} associated to the process $(\eta_t^n)_{t \geq 0}$. 

Since the topology of the space of distributions is not very strong, it is sometimes more convenient to consider the {\em Sobolev spaces} $H_{\ell}(\bb R)$ instead, defined as the closure of $\mc S(\bb R)$ with respect to the norms
\[
\|f\|_{H_{\ell}(\bb R)} = \Big(\int f(x)(-\Delta +x^2)^\ell f(x) dx\Big)^{1/2}.
\] 
One can check that $(X_t^n)_{t \geq 0}$ is a well-defined process in $H_{-2}(\bb R)$. The following result was proved in \cite{chang1994equilibrium}:

\begin{proposition}    \label{p2}
Let $(\eta_t^n)_{t \geq 0}$ the lattice gas with initial law $\nu_\rho$. There exists a constant $D(\rho)$ such that for any $T >0$,
\[
\lim_{n \to \infty} X_t^n =X_t
\]
in law with respect to the $J_1$-Skorohod topology of $\mc D([0,T],H_{-2}(\bb R))$, where $(X_t)_{t \geq 0}$ is the stationary solution of
\begin{equation}\label{ou}
\partial_t X = D(\rho) \Delta X + \sqrt{2D(\rho)\rho(1-\rho)} \nabla \dot{\mc W}_t.
\end{equation}
In this equation, $\dot{\mc W}_t$ denotes a standard, space-time white noise.
\end{proposition}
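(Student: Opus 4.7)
The plan is to characterise the limit as the unique stationary solution of the martingale problem associated with the Ornstein--Uhlenbeck equation \eqref{ou}, following the tightness-plus-identification scheme. I would first fix $f \in \mc S(\bb R)$ and apply Dynkin's formula to $X_t^n(f)$:
\[
X_t^n(f) = X_0^n(f) + \int_0^t n^2 L_b \big(X_s^n(f)\big)\, ds + \mc M_t^n(f),
\]
where $\mc M_t^n(f)$ is a martingale whose predictable quadratic variation is, up to negligible cross terms, $\int_0^t \sum_x c_x(\eta_s^n)\big(\eta_s^n(x)-\eta_s^n(x+1)\big)^2 \big(\nabla_n f(\tfrac{x}{n})\big)^2\, ds$. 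Since $\nu_\rho$ is product and invariant, the equivalence of ensembles and stationarity yield $L^1(\bb P_n)$ convergence of this bracket to $2\bar c\,\rho(1-\rho)\int_0^t\|\nabla f\|_{L^2}^2\,ds$, with $\bar c = \int c\, d\nu_\rho$. Tightness in $\mc D([0,T],H_{-2}(\bb R))$ would be deduced from Mitoma's criterion, which reduces it to tightness of the real-valued processes $X_\cdot^n(f)$, together with Aldous--Rebolledo applied pointwise, using uniform $L^2(\bb P_n)$ bounds on drift and bracket.

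After a discrete summation by parts, the drift equals
\[
\int_0^t \tfrac{1}{\sqrt n}\sum_x W_{x,x+1}(\eta_s^n)\, n\,\nabla_n f\big(\tfrac{x}{n}\big)\,ds,
\]
where the instantaneous current is $W_{x,x+1}(\eta) = c_x(\eta)\big(\eta(x)-\eta(x+1)\big)$. For the SSEP case $c\equiv 1$ this current is already a gradient, so another summation by parts gives convergence of the drift to $\int_0^t X_s(\Delta f)\,ds$ directly, and the identification of $D(\rho) = 1$ is immediate. The martingale CLT then pins down $X_t$ as the stationary solution of \eqref{ou}, and uniqueness of the martingale problem closes the argument.

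The main obstacle is that, for general reversible $c$, the current $W_{x,x+1}$ is \emph{not} of gradient form. One has to invoke Varadhan's non-gradient method: there exist a constant $D(\rho)$ and a local function $g$ such that
\[
W_{x,x+1}(\eta) = -D(\rho)\big(\eta(x+1)-\eta(x)\big) + L_b(\tau_x g)(\eta) + \mc E_x(\eta),
\]
where the $L_b$-exact term becomes a martingale fluctuation that is $o(1)$ in $H_{-1}(\bb P_{\nu_\rho})$ after time integration, and $\mc E_x$ is controlled by a second-order Boltzmann--Gibbs principle. Implementing the replacement requires a spectral gap for $L_b$ on cubes of side $\ell$ uniform in the local density, which follows from the ellipticity and finite range of $c$ by comparison with the SSEP gap $\asymp \ell^{-2}$, together with the equivalence of ensembles to replace local functions by their $\nu_\rho$-averages. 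Once this non-gradient replacement is in place, the drift converges to $\int_0^t X_s\big(D(\rho)\Delta f\big)\,ds$ with the correct Green--Kubo coefficient $D(\rho)$ defined variationally, matching the bracket through the fluctuation-dissipation relation $2D(\rho)\rho(1-\rho)$, and the proof concludes by uniqueness of the stationary Gaussian solution to \eqref{ou}.
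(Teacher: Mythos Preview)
The paper does not give its own proof of this proposition; it is simply quoted as a known result from \cite{chang1994equilibrium}. Your outline is the standard one---tightness via Mitoma's criterion and Aldous--Rebolledo, then identification of limit points through the martingale problem for \eqref{ou}---and it is correct in spirit.

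You are also right to flag the gradient issue: conditions i)--iii) on $c$ do \emph{not} force the current $W_{x,x+1}(\eta)=c_x(\eta)(\eta(x)-\eta(x+1))$ to be a discrete gradient (take for instance $c(\eta)=1+\eta(-1)$), so in full generality Varadhan's non-gradient method is indeed required, with the fluctuation--dissipation decomposition and the spectral-gap input you describe. Chang's paper in fact treats only the \emph{gradient} reversible case; the non-gradient extension in one dimension was carried out later (see e.g.\ \cite{sellami}, which appears in the bibliography but is not cited at this point). So your sketch is, if anything, more careful about the scope of the statement than the paper's own citation.
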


\section{Replacement lemma and entropy bound}

In this section we will establish two estimates that are fundamental to the proof of Theorem \ref{t2}. First, we obtain a sharp bound on the entropy production for the environment process. Then, we prove the so-called {\em replacement lemma}, that allows to write $A_t$ as a function of the density of particles plus an error that vanishes in the limit. 

\subsection{Entropy bound}\label{entropy_section}

Let us recall that the processes $(\eta_t^n)_{t \geq 0}$ and $(\xi_t^n)_{t \geq 0}$ start from the Bernoulli product law $\nu_\rho$. We recall that $\nu_\rho$ is invariant under the evolution of $(\eta_t^n)_{t \geq 0}$ and stress that it is not invariant under $(\xi_t^n)_{t \geq 0}$. Let $\mu_t^n$ be the law of $\xi_t^n$ and define
\[
H_n(t) := H(\mu_t^n | \nu_\rho), 
\]
where 
\[
H(\mu|\nu) := \int f \log f d\nu, \quad f = \frac{d\mu}{d\nu}
\]
is the relative entropy (or Kullback-Leibler divergence) of $\mu$ with respect to $\nu$. The main result of this section is the following bound:

\begin{theorem}\label{entropy_bound}
\label{t3} There exists $C$ depending only on $\rho$, $\{r_z; z \in \mc R\}$ and $\epsilon_0$ such that $H_n'(t) \leq C$ for any $t \geq 0$. In particular, $H_n(t) \leq Ct$ for any $t \geq 0$.
\end{theorem}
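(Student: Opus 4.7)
The plan is to combine the standard relative-entropy production inequality with a local ``flow'' representation of the walker-induced drift. Writing $f_t := d\mu_t^n/d\nu_\rho$, the identity $\partial_t f_t = L_n^* f_t$ in $L^2(\nu_\rho)$ together with the sharpened log-sum inequality $a\log(b/a)\le (b-a)-(\sqrt b-\sqrt a)^2$, applied jump by jump to the generators $L_b$ and $L^{rw}$, yields
\[
H_n'(t)\;\le\; n\int V f_t\,d\nu_\rho \;-\; n^2 \sum_x \int c_x\bigl(\sqrt{f_t^{x,x+1}}-\sqrt{f_t}\bigr)^2 d\nu_\rho,
\]
where $V:=(L^{rw})^*\mathbf 1 = \sum_{z\in\mc R}(r_z\circ\tau_{-z}-r_z)$ is local and $\nu_\rho$-mean-zero. (I use $L_b^*\mathbf 1=0$ from the reversibility of $L_b$ and drop the nonpositive walker Dirichlet form.) The entire task is to upgrade the naive estimate $n\int V f_t\,d\nu_\rho = O(n)$ to $O(1)$, using the $n^2$-sized bulk Dirichlet form as a reservoir.

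The key step is to construct $h\colon\Omega\to \RR$ satisfying $L^{ex}h = V$, where $L^{ex}$ is the SSEP generator with unit rates, whose bond gradients $\nabla_x h(\eta):=h(\eta^{x,x+1})-h(\eta)$ are supported on a finite set of bonds and uniformly bounded in $\eta$. The existence of this ``flow'' rests on $V$ being local, $\nu_\rho$-mean-zero, and having its linear-in-$\xi$ part $\sum_y a_y(\xi(y)-\rho)$ satisfy $\sum_y a_y = 0$---the latter is automatic from the telescoping structure of each summand $r_z\circ\tau_{-z}-r_z$, which matches the sole SSEP conservation law. One builds $h$ level by level in the Walsh chaos expansion: at the linear level $h_1 = \sum_y b_y(\xi(y)-\rho)$ with $b$ solving the discrete Poisson equation $\Delta b = a$, for which $\sum_y a_y = 0$ forces the flow $\nabla b$ to be finitely supported even though $b$ itself grows; higher chaos components reduce to analogous finite-size discrete Poisson problems, and only finitely many contribute because $V$ is local.

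Given such $h$, reversibility of $L^{ex}$ under $\nu_\rho$ gives $\int V f_t\,d\nu_\rho = -\tfrac12 \sum_x \int \nabla_x h\cdot\nabla_x f_t\,d\nu_\rho$; factoring $\nabla_x f_t = (\sqrt{f_t^{x,x+1}}-\sqrt{f_t})(\sqrt{f_t^{x,x+1}}+\sqrt{f_t})$, Cauchy--Schwarz together with $(\sqrt a+\sqrt b)^2\le 2(a+b)$ and the swap-invariance of $\nu_\rho$ produce
\[
\Bigl|\int V f_t\,d\nu_\rho\Bigr|\;\le\; C_h\sqrt{\textstyle \sum_x \int (\sqrt{f_t^{x,x+1}}-\sqrt{f_t})^2 d\nu_\rho}, \qquad C_h^2 := \sum_x \|\nabla_x h\|_\infty^2 <\infty.
\]
The ellipticity bound $c_x\ge \epsilon_0$ and Young's inequality $nC_h\sqrt X \le C_h^2/(2\epsilon_0) + \epsilon_0 n^2 X /2$, with $X = \sum_x \int (\sqrt{f_t^{x,x+1}}-\sqrt{f_t})^2 d\nu_\rho$, absorb the walker term into half of the bulk Dirichlet form, leaving an additive constant $C := C_h^2/(2\epsilon_0)$ that depends only on $\rho$, $\{r_z\}$, and $\epsilon_0$. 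Hence $H_n'(t)\le C$; integrating in $t$ and using $H_n(0)=0$ gives $H_n(t)\le Ct$. The main obstacle is the flow construction beyond the linear chaos: the action of $L^{ex}$ on higher Walsh tensors must be analyzed to verify that $\nabla h$ can always be chosen finitely supported and bounded, but each chaos level remains a finite discrete Poisson problem.
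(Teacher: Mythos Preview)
Your overall architecture matches the paper's: bound $H_n'(t)$ by a drift term $n\int V f_t\,d\nu_\rho$ minus $\epsilon_0 n^2 \mc D(\sqrt{f_t})$ (the paper reaches the same place via $H_n'(t)\le 2\lrb{\sqrt{f_t},L_n\sqrt{f_t}}$, which after the identity $\sqrt a(\sqrt b-\sqrt a)=-\tfrac12(\sqrt b-\sqrt a)^2+\tfrac12(b-a)$ yields the same drift $V=2\psi$), then trade the drift against the Dirichlet form via Cauchy--Schwarz and Young. The difference lies in how you manufacture the ``flow'' that makes this trade possible.

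You propose solving the global Poisson equation $L^{ex}h=V$ with $\nabla_x h$ finitely supported and bounded, and you correctly flag the higher-chaos levels as the obstacle. The paper sidesteps this entirely: it never solves a Poisson equation. Instead it uses the elementary observation that a shift restricted to a finite window is a \emph{composition of nearest-neighbour swaps}: if $r$ is supported in $\{0,\dots,k\}$ then $r(\tau_1\xi)=r(\nabla^{k,k+1}\cdots\nabla^{0,1}\xi)$, and telescoping gives
\[
r\circ\tau_1-r \;=\; \sum_{j=0}^{k}\bigl(g_j^{\,j,j+1}-g_j\bigr),\qquad g_j(\xi):=r(\nabla^{j-1,j}\cdots\nabla^{0,1}\xi),
\]
a finite sum of bond-gradients of \emph{different} bounded functions. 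This is strictly weaker than $V=L^{ex}h$ for a single $h$, but it is exactly what Lemma~\ref{l1} (your Cauchy--Schwarz step) needs, and it works uniformly across all chaos levels with no further analysis. Your route would instead require solving the $k$-particle SSEP Poisson equation on an infinite state space for each $k\ge 2$; even exploiting the translational-gradient structure of $V$, it is not clear this produces finitely-supported $\nabla h$, and it is in any case far more than necessary. Replacing your Poisson step with the shift-as-swaps telescoping removes the acknowledged obstacle and completes the proof.
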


\begin{remark}
In \cite{afjv} it is proved that $H'_n(t) \leq Cn$. As observed in \cite{bertini2003large}, a bound of this type is enough (aside from the usual model-dependent technical points) to adapt Varadhan's approach to obtain hydrodynamic limits and the associated large deviations principle. In \cite{afjv}, \cite{avena2009large}, this strategy was successfully applied for the process $(\xi_t^n)_{t \geq 0}$. Actually, the bound $H_n'(t) \leq Cn$ is not hard to prove (see Lemma 2.2 in \cite{afjv}, Lemma 3.2 in \cite{bertini2003large} or Lemma 6.1 in \cite{farfan2011hydrostatics}). A bound of the form
\[
\lim_{n \to \infty} \frac{H_n(t)}{n} =0
\]
is more difficult to obtain, and is the main point of the so-called Yau's {\em relative entropy} method in hydrodynamic limits, see \cite{yau1991relative} and Chapter 6 of \cite{kl}. Surprisingly, an adaptation of Yau's method to the model considered in this article only gives a bound of the form
\[
\lim_{n \to \infty} \sup_{0 \leq s \leq t} \frac{H_n'(t)}{n} =0,
\]
which is very far from Theorem \ref{t3}.
\end{remark}

\begin{proof}
Let $f_t$ be the Radon-Nykodim derivative of $\mu_t^n$ with respect to $\nu_\rho$ (we are not indexing in $n$ in order not to overcharge the notation). By Theorem A.9.2 in \cite{kl}, we have that
\[
H_n'(t) \leq 2 \lrb{\sqrt{f_t}, L_n \sqrt{f_t}}.
\]
In the last equation and throughout the rest of the article, $ \lrb{\cdot,\cdot} $ denotes the inner product in $ L^2(\nu_{\rho}) $. 

Recall that $L_n = n^2 L_b+nL^{rw}$. Since $\nu_\rho$ is invariant under $L_b$, we have that $\lrb{\sqrt{f_t}, L_b \sqrt{f_t}} \leq 0$. Even more, $\nu_{\rho}$ is reversible for $L_b$. From reversibility one can show
\begin{equation}\label{wartortle}
\lrb{\sqrt{f_t}, L_b \sqrt{f_t}} = -\frac{1}{2}\sum_{x \in \bb Z} \int c_x(\xi) ( \sqrt{f_t(\xi^{x,x+1})}-\sqrt{f_t(\xi)})^2 d\nu_\rho.
\end{equation}
Let us introduce the {\em Dirichlet form} $\mc D(\cdot)$, defined as
\begin{equation}\label{dirichlet_form_definition}
\mc D(h) = \frac{1}{2}\sum_{x \in \bb Z} \int ( h(\xi^{x,x+1})-h(\xi))^2 d\nu_\rho
\end{equation}
for any $h: \Omega \to \bb R$. Thanks to the ellipticity condition $c_x \geq \epsilon_0$ and to \eqref{wartortle}, we see that
\begin{equation}
\label{pavo}
H_n'(t) \leq - \epsilon_0\, n^2\,\mc D(\sqrt{f_t})  + 2n \lrb{\sqrt{f_t}, L^{rw} \sqrt{f_t}}.
\end{equation}
Therefore, if we are able to control $\lrb{\sqrt{f_t}, L^{rw} \sqrt{f_t}}$ in terms of the Dirichlet form of $\sqrt{f_t}$, the theorem will be proved. The following lemma provides the required bound, which is going to be used several times in the remaining of the article.
\end{proof}

\begin{lemma}\label{lema_fundamental}
For any $f\geq 0$ such that $\int f\,d\nu_{\rho}=1$, the following inequality holds:

\begin{equation}\label{squirtle}
\lrb{\sqrt f, L_n \sqrt f} \leq -\epsilon_0 \,n^2 \mc D(\sqrt f) + \lrb{\psi, n f},
\end{equation}
where 
\begin{equation}\label{charmander}
\psi(\xi):= \frac{1}{2}\sum_{z \in \mc R} (r_z(\tau_{-z} \xi) -r_z(\xi)).
\end{equation}
In addition, for all $\beta >0$,
\begin{equation}\label{blastoise}
\lrb{n\psi, f}\leq \beta \mc D(\sqrt{f}) + \frac{Cn^2}{\beta},
\end{equation}
where $C>0$ does not depend on $n$.
\end{lemma}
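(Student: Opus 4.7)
The strategy is to treat \eqref{squirtle} and \eqref{blastoise} in turn, exploiting the split $L_n = n^2 L_b + n L^{rw}$ and the fact that $\psi$ has zero mean under $\nu_\rho$ by translation invariance of the product measure.

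For \eqref{squirtle} the $n^2 L_b$ contribution is already handled by \eqref{wartortle} together with the ellipticity bound $c_x \geq \epsilon_0$, producing $-\epsilon_0 n^2 \mc D(\sqrt f)$. For the random-walk piece I would expand
\[
\lrb{\sqrt f,\,L^{rw}\sqrt f} = \sum_{z\in\mc R}\int r_z(\xi)\sqrt{f(\xi)}\lrc{\sqrt{f(\tau_z\xi)}-\sqrt{f(\xi)}}\,d\nu_\rho
\]
and apply the elementary inequality $\sqrt{ab}\leq \tfrac12(a+b)$ with $a=f(\xi)$ and $b=f(\tau_z\xi)$ to deduce
\[
\sqrt{f(\xi)}\lrc{\sqrt{f(\tau_z\xi)}-\sqrt{f(\xi)}}\leq \tfrac12\lrc{f(\tau_z\xi)-f(\xi)}.
\]
The shift invariance of $\nu_\rho$ then converts $\int r_z(\xi)f(\tau_z\xi)\,d\nu_\rho$ into $\int r_z(\tau_{-z}\xi)f(\xi)\,d\nu_\rho$, producing exactly the function $\psi$ from \eqref{charmander}. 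Multiplying by $n$ and adding the $L_b$ contribution gives \eqref{squirtle}.

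For \eqref{blastoise} the plan is to convert $\psi$ into a finite sum of swap gradients. The crucial observation is that $\psi$ is not only mean zero under $\nu_\rho$ but mean zero \emph{conditionally} on the particle count $N_\Lambda$ in any finite box $\Lambda$ containing the supports of all $r_z$ and of all their shifts $r_z(\tau_{-z}\cdot)$: indeed $\nu_\rho(\cdot\mid N_\Lambda=k)$ is uniform on configurations of $\Lambda$ with $k$ particles, hence invariant under every permutation of $\Lambda$, in particular one realising the shift by $z$. Because the exclusion dynamics on $\Lambda$ is irreducible on each fiber $\{N_\Lambda=k\}$, the Fredholm alternative supplies a bounded $g\colon\{0,1\}^\Lambda\to\bb R$ with
\[
\psi(\xi)=\sum_{(x,x+1)\subset\Lambda}\lrc{g(\xi^{x,x+1})-g(\xi)}.
\]
Integrating by parts bond by bond via the invariance of $\nu_\rho$ under exchanges, factoring $f(\xi^{x,x+1})-f(\xi)=(\sqrt{f(\xi^{x,x+1})}-\sqrt{f(\xi)})(\sqrt{f(\xi^{x,x+1})}+\sqrt{f(\xi)})$, and applying Cauchy-Schwarz with the crude bound $\int(\sqrt{f(\xi^{x,x+1})}+\sqrt{f(\xi)})^2d\nu_\rho\leq 4$, each bond contributes at most $\tfrac{\beta'}{2}\int(\sqrt{f(\xi^{x,x+1})}-\sqrt{f(\xi)})^2d\nu_\rho+\tfrac{2\|g\|_\infty^2}{\beta'}$ by Young's inequality. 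Summing over the finitely many bonds of $\Lambda$, multiplying by $n$, and calibrating $\beta'=\beta/(n|\Lambda|)$ yields the desired $\lrb{n\psi,f}\leq \beta\mc D(\sqrt f)+Cn^2/\beta$.

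The main obstacle is establishing the swap-gradient decomposition: once the fiberwise mean-zero property of $\psi$ is secured via exchangeability, the rest is a routine Cauchy-Schwarz/Young argument, and the resulting constant $C$ depends only on $\rho$, $\mc R$, and the $r_z$'s (through $\|g\|_\infty$ and $|\Lambda|$), as required.
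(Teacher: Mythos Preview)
Your proof of \eqref{squirtle} coincides with the paper's: the AM--GM bound $\sqrt{ab}\leq\tfrac12(a+b)$ is equivalent, after dropping the nonnegative square, to the identity $\sqrt a(\sqrt b-\sqrt a)=-\tfrac12(\sqrt b-\sqrt a)^2+\tfrac12(b-a)$ used there, and the shift invariance step is identical.

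For \eqref{blastoise} your argument is correct but takes a different route. The paper decomposes each shift explicitly as a product of nearest-neighbour transpositions: if $r$ is supported in $\{0,\dots,k\}$ then $r(\tau_1\xi)=r(\nabla^{k,k+1}\cdots\nabla^{0,1}\xi)$, so $r\circ\tau_1-r$ telescopes into $k+1$ terms of the form $h^{x,x+1}-h$, to each of which Lemma~\ref{l1} applies; this yields an explicit constant in terms of $\|r_z\|_\infty$, the support sizes, and $|\mc R|$. You instead establish the fiberwise mean-zero property of $\psi$ via exchangeability and invoke the Fredholm alternative to obtain the swap-gradient representation $\psi=\sum_{(x,x+1)\subset\Lambda}(g^{x,x+1}-g)$ abstractly. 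Both routes feed into the same integration-by-parts plus Young computation. The paper's approach is constructive and gives an explicit constant; yours is cleaner and immediately extends to any local function that is mean zero on every particle-number fiber, at the price of a constant depending on the non-explicit $\|g\|_\infty$. One small caveat in your phrasing: the shift by $z$ is not literally a permutation of the finite box $\Lambda$; what exchangeability actually gives you is that the marginals of the canonical measure on any two subsets of $\Lambda$ of equal size coincide, which is exactly what is needed to match the laws of $r_z(\xi)$ and $r_z(\tau_{-z}\xi)$.
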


\begin{proof}

Combining \eqref{wartortle} and the ellipticity assumption, we get 

\begin{equation}
2\lrb{\sqrt f, L_n \sqrt f} \leq -\epsilon_0 n^2 \mc D(\sqrt f) + 2n \lrb{\sqrt f, L^{wr} \sqrt f}.
\end{equation}

Using the identity $\sqrt a(\sqrt b - \sqrt a) = -\frac{1}{2}(\sqrt b -\sqrt a)^2 + \frac{1}{2}(b-a)$, we get 
\[
\begin{split}
\lrb{\sqrt{f}, L^{rw} \sqrt{f}}
		&= \sum_{z \in \mc R} \int r_z(\xi) \sqrt{f(\xi)} ( \sqrt{f(\tau_z\xi)}-\sqrt{f(\xi)})d\nu_\rho\\
		&=-\frac{1}{2} \sum_{z \in \mc R} \int r_z(\xi) ( \sqrt{f(\tau_z\xi)}-\sqrt{f(\xi)})^2d\nu_\rho\\
		&\quad \quad +  \frac{1}{2}\sum_{z \in \mc R} \int r_z(\xi) ( f(\tau_z\xi)-f(\xi))d\nu_\rho.
\end{split}
\]
Neglecting the first term and performing the change of variables $\xi \mapsto \tau_z \xi$, we conclude 
\begin{equation}\label{adjoint}
\lrb{\sqrt{f}, L^{rw} \sqrt{f}} \leq \frac{1}{2}\sum_{z \in \mc R} \lrb{f, \,r_z \circ \tau_{-z} - r_z},
\end{equation}
and this finishes the proof of \eqref{squirtle}.

It remains to prove \eqref{blastoise}. Let us start with the function $r\circ \tau_1  - r$, where $r$ is local. We are going to write $r\circ \tau_1 - r$ as a sum of terms of the form $h^{x,x+1}-h$ and apply Lemma \ref{l1}.

To simplify the notation, assume that $r$ has support in $\{0,\ldots, k\}$ and denote $\nabla^{x,x+1}h:=h^{x,x+1}-h$. Then $r(\tau_1\xi)=r(\nabla^{k,k+1}\cdots \nabla^{0,1}\xi)$. Therefore

\begin{equation}
r(\tau_1\xi) - r(\xi) = r(\nabla^{0,1}\xi) - r(\xi) + \sum_{y=1}^k  r(\nabla^{y,y+1}\cdots \nabla^{0,1}\xi) - r(\nabla^{y-1,y}\cdots \nabla^{0,1}\xi).
\end{equation}

Applying Lemma \ref{l1}, we get, for any $\beta >0$,

\begin{equation}
\lrb{nf, r\circ \tau_1  - r} \leq \beta \mc D(\sqrt f) + \frac{n^2}{\beta}||r||_{\infty}^2 k.
\end{equation}

Using a telescoping argument, we can obtain a similar bound for \\ $\lrb{nf,r\circ \tau_z - r}$. Summing over $z\in \mc R$ we finish the proof. Notice that the constant $C$ in the statement depends on $\psi$. It is a function of the size of $\mc R$, the sizes of the supports of the $r_z$ and the numbers $||r_z||_{\infty}$.
%
\end{proof}

\subsection{Replacement lemma}

Let $ \varphi:\RR\to\RR_+ $ be a smooth function with compact support in $ (0,1) $ and such that $ \int_{\RR}\varphi(u)\,du=1 $. Let $ \varphi_{\epsilon}(u):=\epsilon^{-1}\varphi(u/\epsilon) $.

In this section we will prove that the additive functional $A_t^n$ is asymptotically equivalent to a function of the density of particles around the origin. More precisely, we will prove that
\[
\limsup_{\epsilon \to 0} \limsup_{n \to \infty} \bb P_n( | \sqrt n \int_0^t  \omega(\xi_s^n) - v(\rho) - v'(\rho) (\xi^n_s \star \varphi_\epsilon)(0) \,ds | > \delta )  =0,
\]
where we use the notation
\begin{equation}\label{def_convolution}
(\varphi_\epsilon\star \xi)(x):=
\frac{1}{n}\sum_{y\in\ZZ} \varphi_\epsilon\lrp{\frac{y}{n}}(\xi(x+y)-\rho).
\end{equation}
In this theorem the particular form of the function $\omega$ does not play a fundamental role. In fact, this result is a particular instance of what is known in the literature as the {\em replacement lemma}, which roughly states that any local function of $\xi_t^n$ is asymptotically equivalent to a function of the density of particles around the origin. We take averages using a smooth function instead of the usual arithmetic mean for technical reasons having to do with the topology of Skorohod space. This issue  shows up in Section \ref{sec_characterization}, where we characterize the limiting trajectories of the random walk. 

\begin{theorem}[Replacement lemma]
\label{t4}
Let $\phi: \Omega \to \bb R$ be a local function. For $\lambda \in [0,1]$, define $\bar{\phi}(\lambda) = \int \phi \,d \nu_\lambda$.
Then, for any $\delta > 0$ and any $t \geq 0$,
\[
\limsup_{\epsilon \to 0} \limsup_{n \to \infty} \bb P_n( | \sqrt n\int_0^t  \phi(\xi_s^n) - \bar \phi(\rho) - \bar \phi'(\rho) (\xi^n_s \star \varphi_\epsilon)(0)  \,ds | > \delta )  =0.
\]
\end{theorem}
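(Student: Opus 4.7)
My plan is to follow the classical one-block/two-block/Taylor scheme of the hydrodynamic limits literature (see Chapter 5 of \cite{kl}), crucially exploiting Lemma \ref{lema_fundamental} and the sharp entropy bound of Theorem \ref{t3} to handle the extra $\sqrt n$ prefactor. Throughout, I abbreviate $V(\xi) := \phi(\xi) - \bar\phi(\rho) - \bar\phi'(\rho)(\xi \star \varphi_\epsilon)(0)$ and $Y_n := \sqrt n \int_0^t V(\xi_s^n)\,ds$.

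First I would reduce the probability estimate to a variational Dirichlet-form problem. Exponential Chebyshev gives $\bb P_n(|Y_n|>\delta) \le e^{-\gamma\delta}\sum_{\pm}\bb E_n[e^{\pm\gamma Y_n}]$ for any $\gamma>0$. A Feynman-Kac/entropy-inequality argument (as in Appendix A.1 of \cite{kl}, with the deviation of $\mu_t^n$ from $\nu_\rho$ controlled by Theorem \ref{t3}) bounds $\tfrac{1}{t}\log \bb E_n[e^{\pm \gamma Y_n}]$ by
\[
\sup_f\big\{\pm \gamma \sqrt n \lrb{V, f} + 2\lrb{\sqrt f, L_n \sqrt f}\big\},
\]
the supremum being over densities $f$ with respect to $\nu_\rho$. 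Lemma \ref{lema_fundamental} would then absorb the non-reversible $L^{rw}$-contribution into the $L_b$-Dirichlet form at the cost of an $O(1)$ additive error, reducing the task to bounding $\sup_f\{\pm \gamma \sqrt n \lrb{V, f} - \tfrac{\epsilon_0 n^2}{2}\mc D(\sqrt f)\}$.

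Next I would perform the block decomposition. With a mesoscopic scale $\ell$ and $\overline\xi^{\,\ell}(0) := \tfrac{1}{\ell}\sum_{y=0}^{\ell-1}\xi(y)$, split $V = V_1 + V_2 + V_3$ with $V_1 := \phi(\xi) - \bar\phi(\overline\xi^{\,\ell}(0))$ (one-block), $V_2 := \bar\phi(\overline\xi^{\,\ell}(0)) - \bar\phi(\rho) - \bar\phi'(\rho)(\overline\xi^{\,\ell}(0) - \rho)$ (Taylor), and $V_3 := \bar\phi'(\rho)[(\overline\xi^{\,\ell}(0) - \rho) - (\xi\star\varphi_\epsilon)(0)]$ (two-block). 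The standard one-block estimate would control $\sqrt n\,\lrb{V_1, f}$ by a fraction of the Dirichlet penalty plus an additive term of order $\sqrt n\cdot o_\ell(1)$; Taylor expansion combined with $\mathrm{Var}_{\nu_\rho}(\overline\xi^{\,\ell}(0)) = O(1/\ell)$ and a local Poincar\'e inequality on blocks of size $\ell$ handles $V_2$ with a contribution of order $\sqrt n / \ell$; and the classical two-block estimate handles $V_3$ in the iterated limit $n \to \infty$ then $\epsilon \to 0$.

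Putting everything together, the variational bound from the first step would be of order $O(1)$ uniformly in $n$, giving $\bb E_n[e^{\pm\gamma Y_n}] \le e^{Ct}$ and hence $\bb P_n(|Y_n|>\delta) \le 2e^{-\gamma\delta + Ct}$; taking $\gamma$ large suffices. The main obstacle will be power-counting in $n$: the $\sqrt n$ prefactor multiplied by the $o_\ell(1)$ errors from the block estimates must still vanish, forcing $\ell$ to grow at a controlled rate faster than some power of $n$ while staying much smaller than $\epsilon n$. This delicate balance is only achievable thanks to the $O(t)$ entropy bound of Theorem \ref{t3}; with the earlier $O(nt)$ bound from \cite{afjv}, the $\sqrt n$ prefactor could not be accommodated, and only the weaker replacement lemma (without $\sqrt n$) would follow, which is insufficient for the central limit theorem.
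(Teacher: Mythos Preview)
Your reduction to a variational problem via exponential Chebyshev, Feynman--Kac, and Lemma~\ref{lema_fundamental} is the right opening move and matches the paper. Your two-block term $V_3$ is also handled essentially as the paper handles degree-one monomials: a telescoping sum plus Lemma~\ref{l1} gives an error of order $\epsilon\gamma^2$ uniformly in $n$.

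The genuine gap is in $V_2$, and it is fatal for the scheme as written. The Taylor remainder $V_2=\bar\phi(\overline\xi^{\,\ell}(0))-\bar\phi(\rho)-\bar\phi'(\rho)(\overline\xi^{\,\ell}(0)-\rho)$ is a function of the block empirical density $\overline\xi^{\,\ell}(0)$, which is the conserved quantity for the exclusion dynamics restricted to the block. Consequently $V_2(\xi^{x,x+1})=V_2(\xi)$ for every bond inside the block, so the local Dirichlet form annihilates $V_2$, and no ``local Poincar\'e inequality on blocks of size $\ell$'' can produce a bound of the form $\gamma\sqrt n\,\lrb{V_2,f}\le \kappa n^2\mc D(\sqrt f)+O(1)$. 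Inside the Feynman--Kac variational problem the supremum over densities $f$ of $\gamma\sqrt n\,\lrb{V_2,f}$ is simply not controlled by the Dirichlet penalty; your claimed contribution ``of order $\sqrt n/\ell$'' would only follow under $\nu_\rho$, not uniformly over $f$. (A similar issue infects $V_1$: the equivalence-of-ensembles correction $E_{\nu_{k/\ell,\Lambda_\ell}}[\phi]-\bar\phi(k/\ell)$ is again a function of the conserved quantity.)

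The paper sidesteps this by decomposing $\phi$ into centered monomials $\xi(A)=\prod_{x\in A}(\xi(x)-\rho)$ instead of using the one-block/two-block/Taylor scheme. Degree-one monomials are linear, so the telescoping/Lemma~\ref{l1} argument applies directly and cleanly. Degree-$\geq 2$ monomials are the analogue of your $V_2$, and for these the paper \emph{leaves} the Feynman--Kac framework: after replacing the two extremal sites of $A$ by smooth averages over boxes of size $\epsilon n$ (this replacement \emph{is} done via the variational problem), the remaining fully averaged product is bounded directly in $L^1(\mu_s^n)$ using the entropy inequality
\[
\EE_{\mu_s^n}\big[\sqrt n\,(\xi\star\varphi_\epsilon)^2(y_0)\big]\le \frac{H_n(s)}{\alpha}+\frac{1}{\alpha}\log\EE_{\nu_\rho}\big[e^{\alpha\sqrt n\,(\xi\star\varphi_\epsilon)^2(y_0)}\big],
\]
subgaussian concentration of the average under $\nu_\rho$, and the choice $\alpha\sim\sqrt n$. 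This is precisely where the sharp bound $H_n(s)\le Cs$ of Theorem~\ref{t3} is indispensable---not in setting up Feynman--Kac, where no entropy input is needed, but in killing the quadratic-in-fluctuation terms that the Dirichlet form cannot touch.
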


\begin{proof}
First we observe that for any random variable $X$,
\begin{equation}
 P(|X|>\delta) \leq P(X>\delta) + P(-X >\delta).
\end{equation} 
Considering $\phi$ and $-\phi$, it is enough to prove that
\begin{equation}\label{pato}
\limsup_{\epsilon \to 0} \limsup_{n \to \infty} \bb P_n( \sqrt n \int_0^t  \phi(\xi_s^n) - \bar \phi(\rho) - \bar \phi'(\rho) (\xi^n_s \star \varphi_\epsilon)(0) \,ds > \delta )  =0.
\end{equation}
Before entering into the details of the proof, let us see how are we going to take advantage of Lemma \ref{l2}. For any bounded function $V$ and any positive $\gamma$ and $ \beta $, 
\begin{equation}\label{pichu}
\begin{split}
\log \bb P_n( \int_0^t V(\xi_s^n) ds > \delta ) 
		&\leq -\gamma \delta  + \log \bb E_n[ e^{\gamma \int_0^t V(\xi_s^n) ds }]\\
		&\leq -\gamma \delta + t\cdot\sup_{f}
		\lrch{
\lrb{\gamma V, f} + (\beta - \epsilon_0)n^2\mc D(\sqrt f) + \frac{C}{\beta}	
	},
\end{split}
\end{equation}
where the supremum is taken over all $ f\geq 0 $ such that $ \int f\,d\nu_{\rho}=1 $ and where $ C >0$ does not depend on $ n $. The expression above becomes easier to remember if one keeps in mind that the term $ -\epsilon_0 \mc D(\sqrt f) $ comes from the reversible dynamics and the term with $ \beta $ comes from the random walk dynamics.

Let $ \mc R\subset \ZZ $ be the support of $ \phi $. The first thing to notice is that every mean-zero local function $ \phi $ can be written as a linear combination of the simpler variables $\{ \xi(A): A\subset \mc R\} $, where
\begin{equation}\label{key}
\xi(A):=\prod_{x\in A}(\xi(x)-\rho).
\end{equation}

 It is enough, then, to prove inequality \eqref{pato} when $ \phi(\xi)$ is of the form $ \phi(\xi)=\xi(A) $ for some finite set $ A $.

We start with the simplest case, which is 

\begin{equation}\label{replacement_one_site}
\varlimsup_{\epsilon \to 0}\varlimsup_{n \to \infty}\PP \lrp{
\Big|
	\int_0^t \xi^n_s(0) - \rho - (\xi^n_s\star \varphi_{\epsilon})(0)\,ds
\Big| > \delta
} = 0.
\end{equation}

Since time will not play any role in the computations that follow, we will omit it from the notation for a while. Denote $ \xi_x:=\xi(x) $ and  $ \xxi_0:=	\xi_0-\rho $.  Recall that $ \lrb{\cdot, \cdot} $ denotes the inner product in $ L^2(\nu_{\rho}) $ and that $ \mc D(\sqrt f) $ denotes the Dirichlet form of symmetric exclusion, as defined in \eqref{dirichlet_form_definition}.

Back to the proof of \eqref{replacement_one_site}. In view of \eqref{pichu}, we need to estimate the integral $\lrb{\xxi_0-(\xi\star \varphi_\epsilon)(0),f} $ in terms of $ \mc D(\sqrt f) $. More precisely, we are going to prove that, for any $ \nu_{\rho} $-density $ f $, the following inequality holds:
\begin{equation}\label{flow_ineq}
\gamma  n^{\frac{1}{2}}\lrb{ \xxi_0-(\xi\star\varphi_\epsilon)(0),f} \leq \alpha n^2 \mc D(\sqrt f) + \epsilon\gamma^2\frac{C'}{\alpha} + o_n(1),
\end{equation}
where $ \alpha > 0 $ is arbitrary and $ C' $ does not depend on $ n $.

\bigskip

We would like to write the difference inside the inner product in \eqref{flow_ineq} as a telescoping sum, in order to apply Lemma \ref{l1}. For that we need the coefficients of the $ \xi_x -\rho$ to sum up to $ 1 $, what they almost do. Define
\begin{equation}\label{key}
m_n:=\frac{1}{n}\sum_{x\in \ZZ} \varphi\lrp{\frac{x}{n}}
\end{equation}
and write the telescoping sum
\begin{equation}\label{telescoping_sum}
\begin{aligned}
& \gamma n^{\frac{1}{2}}\lrb{\xxi_0 - \frac{1}{n}\sum_{x\in \ZZ}\varphi_\epsilon\lrp{\frac{x}{n}}\xxi_x, f}\\
=\, &
\gamma n^{\frac{1}{2}}(1-m_n)\lrb{\xxi_0,f}+ 
 \sum_{x\in \ZZ}
\sum_{y=x+1}^{\infty}\varphi\lrp{\frac{y}{n}}
\gamma n^{-\frac{1}{2}}\lrb{\xxi_x-\xxi_{x+1},f
}  .
\end{aligned}
\end{equation}

Since $ m_n $ is a Riemman sum for $ \int_0^\epsilon \varphi_\epsilon(u)\,du =1$, the first term is of order $ n^{-\frac{1}{2}} $. As for the second term, notice that, since $ \varphi_\epsilon $ has support contained in $ (0,\epsilon) $, only finitely many terms of the sum over $ x $ are not null, namely those with $ 0< x < \epsilon n $. 

Fix $ \alpha >0 $. Applying Lemma \ref{l1}, we can bound the second term by

\begin{equation}\label{replacement_one_site_var}
\alpha n^2 \mc D(\sqrt f) + \frac{2\gamma^2}{\alpha n} \sum_{x=0}^{\epsilon n}
\lrp{\frac{1}{n}\sum_{y=x+1}^{\epsilon n}\varphi_{\epsilon}\lrp{\frac{y}{n}}}^2.
\end{equation}
Using that  $ 0\leq \varphi_{\epsilon} \leq \epsilon^{-1}\norm{\varphi}_{\infty} $,  we get the inequality

\begin{equation}\label{l2_tail}
\sum_{x=0}^{\epsilon n}
\lrp{\frac{1}{n}\sum_{y=x+1}^{\epsilon n}\varphi_{\epsilon}\lrp{\frac{y}{n}}}^2\leq \norm{\varphi}^2_{\infty}\epsilon n.
\end{equation}

Therefore, expression \eqref{replacement_one_site_var} is bounded from above by

\[
\alpha n^2 \mc D(\sqrt f) + \frac{2\norm{\varphi}_{\infty} \gamma^2 \epsilon }{\alpha} ,
\]
and this finishes the proof of \eqref{flow_ineq}. Plugging this inequality into \eqref{pichu}, with the choices $ \alpha = \beta = \frac{\epsilon_0}{2} $, we get the Replacement Lemma when the local function is $ \phi(\xi)=\xi_0-\rho $. In an analogous manner, one can prove the lemma for $ \phi(\xi)=\xi_x-\rho $ for any $x\in \mc R$. 
 
Next, we show that the higher order monomials vanish. More precisely, we show that if $ A\subset \ZZ $ is a finite set and $ |A|\geq 2 $ then 

\begin{equation}\label{higher_degree_vanishes}
\limsup_{\epsilon \to 0}\limsup_{n \to \infty}\PP
\lrp{\Big|
\sqrt n\int_0^t  \xi^n_s(A)\,ds \Big| > 
\delta 
}
=0.
\end{equation}

Write the set $ A $ in the form $ A = \{x_0\}\cup A' \cup \{y_0\} $, where we assume that $ x_0<y_0 $ and $ A' \subset \{x_0+1,\ldots, y_0-1\} $. Denote by $ (\xi \star \tilde{\varphi_\epsilon})(x_0) $ the weighted average of the centered configuration $ \xi $ in a box to the left of $ x_0 $:

\[
(\xi \star \tilde{\varphi_\epsilon})(x_0) = \frac{1}{n}\sum_{y\in\ZZ}(\xi(x_0-y)-\rho)\varphi_{\epsilon}\lrp{\frac{y}{n}} = \frac{1}{n}\sum_{y\in\ZZ}\varphi_{\epsilon}\lrp{\frac{y}{n}}	\xxi_{x_0-y}.
\]


To prove assertion \eqref{higher_degree_vanishes}, we prove that each of the probabilities below converges to zero as first $ n\to \infty $ then $ \epsilon \to 0 $.
\begin{equation}
\begin{aligned}
\PP
\lrp{\Big|
	\sqrt n \int_0^t   \xi^n_s(x_0)
	\xi^n_s(A') \lrp{
		\bar{\xi}^n_s(y_0) - (\xi^n_s\star \varphi_{\epsilon})(y_0)
	}
	\,ds 
	\Big|
	> \delta 
}
& \\
\PP
\lrp{\Big|
	\sqrt n \int_0^t   (\bar{\xi^n_s}(x_0) - (\xi^n_s\star \tilde{\varphi_{\epsilon}})(x_0))
	\xi^n_s(A') \lrp{
		\bar{\xi}^n_s(y_0) - (\xi^n_s\star \varphi_{\epsilon})(y_0)
	}
	\,ds 
	\Big|
	> \delta 
}
& \\
\PP
\lrp{\Big|
	\sqrt n \int_0^t
	(\xi^n_s\star \tilde{\varphi_{\epsilon}})(x_0)
	\xi^n_s(A')
	(\xi^n_s\star \varphi_{\epsilon})(y_0)
	\,ds 
	\Big|
	> \delta 
}
\end{aligned}
\end{equation}

To bound the first probability, we mimic the proof of \eqref{replacement_one_site}.  That is, first we use \eqref{pichu} to reduce the proof to a variational problem, then we write the difference $ \bar{\xi}^n_s(y_0) - (\xi^n_s\star \varphi_{\epsilon})(y_0)$ as a telescoping sum as in \eqref{telescoping_sum} and apply Lemma \ref{l1} to each term of the sum, with the roles of $ g $ and $ h $ in that lemma being played by $ \bar{\xi}_x $ and $ \bar{\xi}(x_0)\xi(A') $ respectively. The proof can be replicated to bound the second inequality, this time with the roles of $ g $ and $ h $ being played by $ \xxi_x $ and $ \xi^n_s(A') \lrp{
	\bar{\xi}^n_s(y_0) - (\xi^n_s\star \varphi_{\epsilon})(y_0)
} $, respectively. In both cases, one can use the bounds $ \norm{h}_{\infty}\leq 1 $ and \eqref{l2_tail}.

It remains to deal with the last probability. For that, recall that $ \mu^n_s $ denotes the law of $ \xi^n_s $, the environment as seen from the random walk at time $ s $. We claim that there exists a large $ D = D(t) $ such that, for all $ s\leq t $,

\begin{equation}
\lim_{n\to\infty}	\EE_{n}\lrc{
	\Big|
	\sqrt n 
	(\xi^n_s\star \tilde{\varphi_{\epsilon}})(x_0)
	\xi^n_s(A')
	(\xi^n_s\star \varphi_{\epsilon})(y_0)
	\Big|
}
=0.
\end{equation}

To prove that, we start with Cauchy-Schwarz inequality and the bound $ |\xi(A')| \leq  1$.

\begin{equation}\label{key}
\begin{aligned}
\EE_{\mu^n_s}\lrc{
		\Big|
	\sqrt n 
	(\xi\star \tilde{\varphi_{\epsilon}})(x_0)
	\xi(A')
	(\xi\star \varphi_{\epsilon})(y_0)
	\Big|
}
& \leq 
\frac{1}{2}\,\EE_{\mu^n_s}\lrc{
		\sqrt n
	(\xi\star \tilde{\varphi_{\epsilon}})^2(x_0)
}\\
& +
\frac{1}{2}\,\EE_{\mu^n_s}\lrc{
	\sqrt n
	(\xi\star {\varphi_{\epsilon}})^2(y_0)
}.
\end{aligned}
\end{equation}
We can use relative entropy to replace the expectation under $ \mu^n_s $ by an expectation under $ \nu_{\rho} $ using the following argument: for any $ \alpha >0 $, it holds
\begin{equation}\label{replacement_degree_2}
\EE_{\mu^n_s}\lrc{
	\sqrt n
	\lrp{
		\xi\star \varphi_{\epsilon}
	}^2	(y_0)
}
\leq 
\frac{H(\mu^n_s|\nu_\rho)}{\alpha} 
+\frac{1}{\alpha}
\log 
\EE_{\nu_\rho}\lrc{
	e^{
	\alpha \sqrt n
	\lrp{
		\xi\star \varphi_{\epsilon}
		}^2	(y_0)
	}
}.
\end{equation}

Under the product measure $ \nu_{\rho} $, the random variable in the exponent is a linear combination of i.i.d. random variables, recall  notation \eqref{def_convolution}. The variance of this sum is at most $ \frac{\norm{\varphi}_{\infty}}{\epsilon n} $.   By Lemma \ref{subgaussianity}, the logarithm is bounded by $	\frac{2\alpha \norm{\varphi}_{\infty}}{\epsilon \sqrt n}  $ whenever this quantity is smaller than $ \frac{1}{2} $, that is, whenever $ \alpha \leq \frac{ \epsilon \sqrt n}{4 \norm{\varphi}_\infty }$. Going back to \eqref{replacement_degree_2}, we can choose such an $ \alpha $ of order $ \sqrt n $ and the fact that the entropy is of order $ 1 $, Theorem \ref{entropy_bound}, to conclude

\begin{equation}\label{key}
\lim_{n\to\infty}\EE_{\mu^n_s}\lrc{
	 \sqrt n
	\lrp{
		\xi\star \varphi_{\epsilon}
	}^2	(y_0)
}
=0.
\end{equation}

A parallel argument shows
\begin{equation}\label{key}
\lim_{n\to\infty}\EE_{\mu^n_s}\lrc{
	\sqrt n
	\lrp{
	\xi(A') \star \tilde{\varphi_\epsilon}
}^2	(x_0)
}
=0,
\end{equation}
and this finishes the proof of \eqref{higher_degree_vanishes}.

\end{proof}

\section{Tightness}

In this section we prove that the sequence of additive functionals 
\[
\{A^n_t:t\in [0,T]\}_{n\in\mb N},
\] defined in \eqref{gato}, is tight in $C_{[0,T]}\mb R$. Since $A^n_0 =0$ for all $n\in\mb N$, we only need to prove equicontinuity. 

\bigskip

  The proof is an application of the Kolmogorov-Centov criterion, see Problem 2.4.11 in \cite{karatzas}.

\begin{proposition}\label{kolmogorov_centov}
	Assume that the sequence of stochastic processes $ \{X^n_t:
        t\in [0,T]\} _{n\in\mb N}$  satisfies 
	\begin{equation}
	\varlimsup_{n\to\infty}\mb E[|X^n_t - X^n_s|^\lambda]\leq C|t-s|^{1+\lambda '}
	\end{equation}
	for some positive constants $ \lambda $, $ \lambda ' $ and $ C $ and for all $ s,t\in [0,T] $. Then it also satisfies
	\begin{equation}\label{modulusofcontinuity}
	\lim_{\delta\to 0}\varlimsup_{n\to\infty}\mb P\left(
	\sup_{\substack{|t-s|\leq \delta \\ s,t \in [0,T]}}|X^n_t - X^n_s|>\epsilon 
	\right)=0, \mbox{ for all } \epsilon > 0.
	\end{equation}
\end{proposition}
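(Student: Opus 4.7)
The plan is to carry out the classical Kolmogorov--Centov dyadic chaining (Problem 2.4.11 of \cite{karatzas}) and to observe that all constants produced depend only on $\lambda$, $\lambda'$, $C$ and $T$, so the $\varlimsup_n$ in the hypothesis passes through to the modulus-of-continuity estimate \eqref{modulusofcontinuity}. Fix an exponent $\gamma\in(0,\lambda'/\lambda)$ and write $t^k_i:=iT/2^k$ for the level-$k$ dyadic points.

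The first step is a one-scale estimate. By Markov's inequality, a union bound over the $2^k$ intervals at level $k$, and the hypothesis,
\begin{equation}
\varlimsup_{n\to\infty}\mb P\Big(\max_{0\le i<2^k}|X^n_{t^k_{i+1}}-X^n_{t^k_i}|>2^{-k\gamma}\Big)\le CT^{1+\lambda'}\,2^{-k(\lambda'-\lambda\gamma)}=:a_k,
\end{equation}
with $\sum_k a_k<\infty$ because $\gamma<\lambda'/\lambda$. The second step is the chaining: on the event $E^n_K:=\bigcap_{k\ge K}\{\max_i|X^n_{t^k_{i+1}}-X^n_{t^k_i}|\le 2^{-k\gamma}\}$, a standard dyadic telescoping shows that any two dyadic times $s<t$ with $t-s\le 2^{-K}$ satisfy $|X^n_t-X^n_s|\le 2\sum_{k\ge K}2^{-k\gamma}=C'(\gamma)\,2^{-K\gamma}$. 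Continuity of $t\mapsto X^n_t$ (in force in the target application $X^n=A^n$, where $A^n$ is an absolutely continuous integral) extends the bound from dyadic to arbitrary $s,t\in[0,T]$ with $|t-s|\le 2^{-K}$.

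For the third step, given $\eta>0$, choose $K$ so that $C'(\gamma)2^{-K\gamma}<\eta$ and $\sum_{k\ge K}a_k<\eta$, and set $\delta:=2^{-K}$. Combining Steps 1 and 2 via a union bound yields $\varlimsup_n\mb P(\sup_{|t-s|\le\delta}|X^n_t-X^n_s|>\eta)\le \sum_{k\ge K}a_k<\eta$, and letting $\eta\to 0$ produces \eqref{modulusofcontinuity}.

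The main technical obstacle is the exchange of $\varlimsup_n$ with the countable union over dyadic scales in the third step, since the hypothesis only supplies a limsup pair-by-pair. The cleanest way around this is to truncate the chaining at a finite level $K^\ast\ge K$ chosen so that $\sum_{k>K^\ast}a_k<\eta$; for $K\le k\le K^\ast$ only finitely many applications of the one-scale estimate are needed, and for $k>K^\ast$ the tail is absorbed into the slack of $\eta$ via a uniform-in-$n$ version of Step 1 (which is routinely available in the intended application to $A^n_t$, where the moment bound on the increments of $A^n$ is polynomial in the time increment with constants controlled uniformly in $n$ by the entropy bound of Theorem \ref{entropy_bound}).
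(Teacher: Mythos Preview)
The paper does not supply a proof of this proposition; it simply cites Problem~2.4.11 of \cite{karatzas}. Your argument is precisely the standard dyadic chaining behind that reference, so in that sense your approach and the paper's coincide.

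You are right to flag the exchange of $\varlimsup_n$ with the countable union as a genuine obstacle. In fact the proposition is false as literally stated: take $X^n_t=f_n(t)$ deterministic, with $f_n$ a continuous bump of height $1$ supported in $[\tfrac{1}{n}-\tfrac{1}{n^2},\,\tfrac{1}{n}+\tfrac{1}{n^2}]$. For every \emph{fixed} pair $s,t$ the increment vanishes once $n$ is large, so the hypothesis holds with limsup equal to $0$, yet $\sup_{|t-s|\le\delta}|X^n_t-X^n_s|=1$ for all large $n$ and every $\delta>0$. Your resolution---falling back on a uniform-in-$n$ moment bound and on path continuity, both of which are established for $A^n$ in Theorem~\ref{tightness}---is the correct repair and is exactly how the result is applied in the paper. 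With a uniform-in-$n$ hypothesis the truncation at a finite level $K^\ast$ becomes unnecessary: Step~1 then holds for every $n$, the union over $k\ge K$ goes through directly, and the chaining runs verbatim.
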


\bigskip

More precisely, we are going to prove the following:

\begin{theorem}\label{tightness}
	For any $ \lambda \in (1,2) $, there exists a constant $ C = C(\lambda) $ such that
	\begin{equation}\label{perro}
	\EE \lrc{\Big|\int_t^{t+\tau}\omega(\xi^n_s) - v(\rho)
	\Big|^{\lambda}\,ds}
	\leq C\tau^{3\lambda /4}
	\end{equation}
	holds for every $ t,\tau \in [0,T] $ and for every $ n\in\mb N $. In particular, we can take $ \lambda \in (\frac{4}{3}, 2) $ and apply Proposition \ref{kolmogorov_centov} to show that the sequence $A^n$ is tight in $ C_{[0,T]}\mb R $.
\end{theorem}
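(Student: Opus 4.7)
The plan is to establish \eqref{perro} by combining the Replacement Lemma (Theorem \ref{t4}) with sharp moment estimates for the time-integrated density of symmetric exclusion, in the spirit of \cite{slafips}. Granted \eqref{perro} for any $\lambda \in (4/3, 2)$, the exponent $3\lambda/4 > 1$ and Proposition \ref{kolmogorov_centov} then yields tightness of $(A^n)_{n \in \bb N}$ in $C([0,T],\bb R)$.

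First I would reduce to the stationary law $\nu_\rho$. By the Markov property, it is enough to control $\EE_{\mu^n_t}\big[\big|\sqrt n\int_0^\tau (\omega(\xi^n_s) - v(\rho))\,ds\big|^\lambda\big]$ uniformly in $t \in [0,T]$, where $\mu^n_t$ is the law of $\xi^n_t$. The entropy inequality together with the sharp bound $H_n(t) \leq Ct$ of Theorem \ref{t3} allows one to replace $\mu^n_t$ by $\nu_\rho$ at a cost controlled uniformly in $t \leq T$. Next I would apply the Replacement Lemma in a quantitative $L^\lambda$ form: the variational inequality \eqref{flow_ineq} derived in its proof, combined with Feynman--Kac and Lemma \ref{lema_fundamental}, provides exponential moment bounds on the replacement error, reducing \eqref{perro} to the analogous estimate for
\[
\sqrt n \int_0^\tau (\xi^n_s \star \varphi_\epsilon)(0)\,ds,
\]
uniformly for $\epsilon$ small. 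Using the identity $\xi^n_s(y) = \eta^n_s(y + x^n_s)$ and \eqref{fluctuation_field}, this quantity equals $\int_0^\tau X^n_s(\varphi_\epsilon(\cdot - x^n_s/n))\,ds$: a time integral of the density fluctuation field against a test function shifted by the walker's position.

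The core step is obtaining the $L^2$ bound of order $\tau^{3/2}$ on this last integral. I would proceed via a Kipnis--Varadhan--type decomposition, writing, by Dynkin's formula, $\int_0^\tau X^n_s(\varphi_\epsilon)\,ds$ as a boundary contribution $X^n_\tau(\psi_\epsilon) - X^n_0(\psi_\epsilon)$ plus a martingale, where $\psi_\epsilon$ is chosen so that $D\Delta\psi_\epsilon = \varphi_\epsilon$ in the limiting OU equation \eqref{ou}. Each piece is then controlled using second-moment bounds for the fluctuation field together with the heat-kernel estimate $\langle \varphi_\epsilon, e^{Dr\Delta}\varphi_\epsilon\rangle \lesssim (Dr)^{-1/2}$, whose time integral on $[0,\tau]$ produces the characteristic $\tau^{3/2}$ factor. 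The motion of the walker is treated as a perturbation, its contribution being absorbed by the entropy bound. Finally, promotion from $L^2$ to $L^\lambda$ for $\lambda \in (4/3, 2)$ follows by H\"older interpolation against the trivial pointwise bound $|\omega - v(\rho)| \leq C$, which yields $\big|\sqrt n \int_0^\tau(\omega - v(\rho))\,ds\big| \leq C\sqrt n\,\tau$.

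The main obstacle is extracting the superdiffusive $\tau^{3/2}$ scaling: a naive Feynman--Kac/Dirichlet-form estimate produces only the linear-in-$\tau$ scaling, because the pointwise density $\eta(0) - \rho$ has infinite $H^{-1}$ norm on $\bb Z$. One must exploit the explicit heat-kernel decay associated with the OU equation \eqref{ou}, together with the fact that the walker's displacement is of order $\sqrt n$ on the natural scale, so that its coupling with the density field can be absorbed into the entropy estimate of Theorem \ref{t3}.
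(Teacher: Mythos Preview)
Your proposal has a genuine gap at the ``core step'': the $L^2$ bound via a Kipnis--Varadhan/Dynkin decomposition cannot be carried out as you describe. First, the elliptic equation $D\Delta\psi_\epsilon = \varphi_\epsilon$ has no useful solution because $\int\varphi_\epsilon = 1 \neq 0$, so $\psi_\epsilon$ grows linearly and $X^n_\tau(\psi_\epsilon)$ is ill-defined. One could try a time-dependent test function instead, but more fundamentally the environment process $(\xi^n_t)_{t\ge 0}$ is \emph{not} stationary under $\nu_\rho$ and its invariant measure is unknown; the $L^2$ computations in \cite{slafips} that produce the $\tau^{3/2}$ scaling rely on stationarity, and your suggestion to ``absorb the walker's contribution by the entropy bound'' does not explain how an entropy estimate yields control of second moments. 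This is precisely the obstacle the paper is built to avoid. (Incidentally, the interpolation step is unnecessary: for $\lambda<2$ Jensen alone gives $\bb E|X|^\lambda \le (\bb E X^2)^{\lambda/2}$, and interpolating against the $L^\infty$ bound $C\sqrt n\,\tau$ would introduce an $n$-dependence you cannot afford.)

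The paper takes a different route that sidesteps $L^2$ entirely. It decomposes $\omega - v(\rho)$ into monomials $\prod_{x\in A}(\xi(x)-\rho)$ and, for each, chooses a \emph{$\tau$-adapted} block scale $\ell = \lfloor n\sqrt\tau\rfloor$ (equivalently $\epsilon\sim\sqrt\tau$, not $\epsilon\to 0$ independently). The argument then splits into: (i) a concentration step, where the block average $(\overline{\xi^\ell_s})^2$ is controlled pointwise in $s$ by combining the entropy bound $H_n(s)\le Cs$ with Hoeffding's inequality under $\nu_\rho$, giving tail decay $\bb P(n^{1/2}|\overline{\xi^\ell_s}|>\delta)\lesssim (n/\ell)\delta^{-2}$; and (ii) a replacement step, where the difference between the local function and the block average is controlled via Feynman--Kac and Lemma \ref{l1}, yielding a tail bound $\bb P(|\cdot|>\delta)\lesssim \tau^{3/2}\delta^{-2}$. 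The $\delta^{-2}$ tail does not give a second moment, but Lemma \ref{tail_lemma} converts it to an $L^\lambda$ bound for any $\lambda\in(1,2)$, producing exactly $C\tau^{3\lambda/4}$. The restriction $\lambda<2$ is thus intrinsic to the method, not cosmetic.
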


In the remaining of the section, we are going to prove Theorem \ref{tightness}. The plan is the following: first, we simplify the problem by noticing that we only need to consider additive functionals of the form 

\begin{equation*}
\sqrt n \int_0^t  \xxi^n_s(x_1)\cdots \xxi^n_s(x_k)\,ds;
\end{equation*}
for these functionals, estimate 
\eqref{perro} amounts to a careful reproving of the Replacement Lemma. From a technical point of view, the proofs of the Replacement Lemma and the Entropy Estimate are very similar, hinging upon the estimation of certain time integrals of the process. The estimate is always done in two steps: first one replaces local functions by their space averages and then one makes use of concentration inequalities to bound the averages.

\bigskip

Recall the definition of $A^n$ in \eqref{gato}. Each term inside the time integral is a mean-zero local function. Every such function can be written as a polynomial in the variables $\{\xi_x - \rho\}_{x\in \mc R}$. The number of terms of this polynomial does not depend on $ n $. Therefore, it is enough to prove

\begin{equation}\label{pikachu}
\bb E_n\left[ \Big| \sqrt n \int_t^{t+\tau}  \xxi^n_s(x_1)\cdots \xxi^n_s(x_k)\,ds
\Big|^{\lambda} \right]\leq C\cdot\tau^{3\lambda/4}
\mbox{ for all }t,\tau\leq T.
\end{equation}

To keep notation simple, we are going to prove the special case where the local function is $\xxi_0\xxi_1$. The proof carries almost without modification to more complicated polynomials.

It will be more convenient to work with tail bounds instead of moments. The following lemma will help us to connect tail bounds and moment estimates. Its proof is in the Appendix.

\begin{lemma}\label{tail_lemma}
	Let $ X $ be a nonnegative random variable. Assume $ \mb P(X>\delta) \leq C/\delta^2$ for any $ \delta > 0 $. Then, for any $ \lambda \in (1,2) $, there exists an universal constant $ C(\lambda) $ such that $ \mb E[X^\lambda] \leq C(\lambda)\cdot C^{\lambda/2} $.
\end{lemma}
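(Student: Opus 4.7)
The plan is to use the layer-cake representation
\[
\EE[X^\lambda] = \lambda \int_0^\infty \delta^{\lambda-1} \PP(X > \delta)\, d\delta,
\]
valid for any nonnegative random variable $X$ and any $\lambda > 0$, and then exploit the two available upper bounds for $\PP(X>\delta)$: the trivial bound $\PP(X>\delta) \leq 1$, which is informative for small $\delta$, and the hypothesis $\PP(X>\delta) \leq C/\delta^2$, which is informative for large $\delta$. The two bounds cross at $\delta_0 = \sqrt{C}$, which is therefore the natural splitting point for the integral.

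The calculation then proceeds as follows. On the interval $(0,\sqrt{C}]$ I would use $\PP(X>\delta) \leq 1$, yielding
\[
\lambda \int_0^{\sqrt{C}} \delta^{\lambda-1} d\delta = C^{\lambda/2}.
\]
On the interval $(\sqrt{C},\infty)$ I would use the tail hypothesis, yielding
\[
\lambda C \int_{\sqrt{C}}^\infty \delta^{\lambda-3} d\delta = \frac{\lambda}{2-\lambda}\, C^{\lambda/2},
\]
where the condition $\lambda < 2$ is precisely what makes this integral converge. Adding the two pieces gives
\[
\EE[X^\lambda] \leq \left(1 + \frac{\lambda}{2-\lambda}\right) C^{\lambda/2} = \frac{2}{2-\lambda}\, C^{\lambda/2},
\]
so one may take $C(\lambda) = 2/(2-\lambda)$.

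There is no real obstacle here: the only delicate point is to remember that one needs to truncate with the trivial bound on $\PP(X > \delta)$ near the origin, since blindly applying $C/\delta^2$ there would produce a divergent integral (which is consistent with the fact that the hypothesis alone cannot rule out mass at $0$). The hypothesis $\lambda > 1$ stated in the lemma is not actually used in this argument; it is imposed only because in the applications of the lemma (see the proof of Theorem \ref{tightness}, where we want $3\lambda/4 > 1$ to invoke Proposition \ref{kolmogorov_centov}) one always takes $\lambda \in (4/3, 2)$.
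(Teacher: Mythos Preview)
Your proof is correct and complete; the layer-cake split at $\delta_0=\sqrt{C}$ is the standard argument, and your explicit constant $C(\lambda)=2/(2-\lambda)$ is right. The paper announces that the proof of this lemma is in the Appendix, but the Appendix as written does not actually contain it, so there is nothing to compare against; your argument is exactly what one would expect there. Your observation that the hypothesis $\lambda>1$ plays no role in the lemma itself (the bound holds for all $\lambda\in(0,2)$) is also correct.
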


\bigskip

\noindent\textbf{Step 1: Concentration} For a given $\ell \in \bb N$, denote  $\overline{\xi^{\ell}_s}:=\frac{1}{\ell}(\xxi^n_s(-1)+\cdots + \xxi^n_s(-\ell))$. Then, for $\ell = \lfloor n \sqrt \tau \rfloor$ and for $\lambda \in (1,2)$,
\begin{equation}\label{bulbasaur}
\bb E_n\left[\Big|  \sqrt n \int_t^{t+\tau}  (\overline{\xi^{\ell}_s})^2\,ds
 \Big|^{\lambda} \right] \leq C(\lambda)\cdot \tau^{3\lambda/4}.
\end{equation}

\noindent\textbf{Step 2: Replacement} With the same notation as in Step 1, for $\ell = \lfloor  n \sqrt \tau \rfloor$,

\begin{equation}
\bb E_n\left[\Big|  \sqrt n \int_t^{t+\tau}  \xxi^n_s(0)\,\xxi^n_s(1)-(\overline{\xi^{\ell}_s})^2\,ds
 \Big|^{\lambda} \right] \leq C(\lambda)\cdot \tau^{3\lambda/4}.
\end{equation}

\bigskip

\noindent\textbf{Proof of Step 1: } During the proof, $C$ will denote a positive number that may change from line to line. It depends on $\lambda$ and $T$ but not on any other parameter.

Since $|\overline{\xi^\ell_s}|\leq 1$, we can prove \eqref{bulbasaur} with $|\overline{\xi^\ell_s}|$ in place of $(\overline{\xi^\ell_s})^2$. 

By the entropy inequality,

\begin{equation}\label{ivysaur}
\bb P_n\left(  n^{\frac{1}{2}}|\overline{\xi^\ell_s}|
  > \delta  \right)\leq \frac{H_n(s)+\log 2}{\log \left\{ 1+
 \bb P_n\left(n^{\frac{1}{2}}  |\overline{\xi^\ell_0}|
  > \delta  \right)^{-1} \right\}}
\end{equation}

Recall that, under the initial measure $\nu_{\rho}$, the random variables $\{\xxi_x\}_{x\in\bb Z}$
are independent. By Hoeffiding's Inequality, 

\begin{equation}\label{venusaur}
\bb P_n\left(n^{\frac{1}{2}}  |\overline{\xi^\ell_0}|
  > \delta  \right)\leq 2e^{-2\ell\delta^2/n}.
\end{equation}

We have already proved in Section \ref{entropy_section} that $H_n(s)\leq Cs$ for some universal constant $C$. Combining this fact with \eqref{venusaur} and \eqref{ivysaur}, we can prove

\begin{equation}
\begin{aligned}
\bb P_n\left(n^{\frac{1}{2}}  |\overline{\xi^\ell_s}|
  > \delta  \right)
&  \leq
\frac{H_n(s)+\log 2}{\log\lrch{1+\frac{1}{2}e^{2\frac{\ell}{n}\delta^2}  }}  \\
& \leq \frac{2\lrp{H_n(s)+\log 2}}{2\frac{\ell}{n}\delta^2}\\
&\leq C \frac{ n}{\ell}\frac{1}{\delta^2}.
\end{aligned}  
\end{equation}

Applying Lemma \ref{tail_lemma} and recalling our choice $\ell = n\sqrt t$,

\begin{equation}
\bb E_n [|n^{\frac{1}{2}}  \overline{\xi^\ell_s}|^{\lambda}]\leq C/t^{\lambda/4},\mbox{ for all }s\leq T.
\end{equation}

We finish the proof with an application of Jensen's inequality:

\begin{equation}
\begin{split}
\bb E_n [|\int_t^{t+\tau}n^{\frac{1}{2}}  \overline{\xi^\ell_s}\,ds|^{\lambda}] &\leq 
\tau^{\lambda} \cdot \frac{1}{\tau}\int_t^{t+\tau}\bb E_n[|n^{\frac{1}{2}}  \overline{\xi^\ell_s}|^{\lambda}]\,ds\\
&\leq C\cdot \tau^{3\lambda/4}.
\end{split}
\end{equation}

\hfill \qed

%
%

\bigskip 

\noindent\textbf{Proof of Step 2: } Applying Lemma \ref{tail_lemma}, we see that it suffices to prove, for all $\delta >0$ and $\tau \leq T$, 

\begin{equation}
\begin{split}
\bb P_n\left(\Big|  \sqrt n \int_t^{t+\tau}  \xxi^n_s(0)\cdot(\xxi^n_s(1)-\overline{\xi^{\ell}_s})\,ds
 \Big|>\delta \right) & \leq  C \,\tau^{3/2} / \delta^2\mbox{ and }\\
 \bb P_n\left(\Big|  \sqrt n \int_t^{t+\tau}  \overline{\xi^{\ell}_s}\cdot (\xxi^n_s(0)-\overline{\xi^{\ell}_s})\,ds
 \Big|>\delta \right) & \leq C \,\tau^{3/2} / \delta^2.
\end{split}
\end{equation}

We are going to prove the first inequality only, because the second is analogous. The first idea that comes to mind is to adapt the proof of the Replacement Lemma. In trying that, we run intro trouble when trying to control  term  \eqref{adjoint} in the variational problem. Thus we use the following trick: first, we subtract the troubling term in advance, so that it does not show up in the variational problem; then we estimate it by a separate argument, taking advantage of the entropy estimate proven in Theorem \eqref{entropy_bound}. The proof follows from the following Lemma.

\begin{lemma}\label{removing_annoying_term}
Recall the notation \eqref{charmander}. There exists $\theta_0>0$ such that 

\begin{equation}\label{bellsprout}
\log \bb P_n\left( \int_t^{t+\tau} \pm n^{\frac{1}{2}}\,  \xxi^n_r(0)\cdot(\xxi^n_r(1)-\overline{\xi^{\ell}_r}) - \theta_0 n\,\psi(\xi^n_r)\,dr
>\delta \right)  \leq - C \delta^2/ \tau^{3/2}.
\end{equation}
In fact, we can take $\theta_0=2\tau^{3/2}/\delta\epsilon_0$. The same $ \theta_0 $ satisfies

\begin{equation}\label{putting_annoying_term_back}
\bb P_n\left(\Big|
\int_t^{t+\tau} \theta_0\,n\,\psi(\xi^n_r)\,dr
\Big|>\delta \right)
\leq C \tau^{3/2}/\delta^2.
\end{equation}
\end{lemma}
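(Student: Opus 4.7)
The plan is to handle both inequalities by an exponential Chebyshev / Feynman--Kac argument of the same type used to obtain \eqref{pichu}. For any bounded $V$ and any $\gamma > 0$, this yields
\[
\log \bb P_n\Big(\int_t^{t+\tau} V(\xi^n_r)\,dr > \delta\Big) \leq -\gamma\delta + \tau\sup_f\Big\{\gamma\langle V,f\rangle + \langle\sqrt f, L_n\sqrt f\rangle\Big\},
\]
and inequality \eqref{squirtle} of Lemma \ref{lema_fundamental} further bounds the inner product by $-\epsilon_0 n^2\mc D(\sqrt f) + \langle n\psi, f\rangle$. The two parts of the lemma differ only in what one does with the residual $\langle n\psi, f\rangle$ thus produced.

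For \eqref{bellsprout}, I would take $V = \pm n^{1/2}\xxi(0)(\xxi(1)-\overline{\xi^\ell}) - \theta_0 n\psi$. The combined coefficient of $\langle n\psi,f\rangle$ in the variational expression is $1-\gamma\theta_0$, and the whole point of subtracting $\theta_0 n\psi$ in advance is that the choice $\gamma = 1/\theta_0 = \delta\epsilon_0/(2\tau^{3/2})$ makes this coefficient vanish identically. What remains is
\[
\pm\gamma n^{1/2}\langle\xxi(0)(\xxi(1)-\overline{\xi^\ell}),f\rangle \leq \epsilon_0 n^2\mc D(\sqrt f) + C\gamma^2(\ell/n),
\]
which I would obtain by mimicking the proof of \eqref{flow_ineq}: expand $\xxi(1)-\overline{\xi^\ell} = \ell^{-1}\sum_{k=1}^{\ell}\sum_{x=-k}^{0}(\xxi(x+1)-\xxi(x))$ as a telescoping sum over bond gradients (each antisymmetric under the exchange $\sigma^{x,x+1}$), use $|\xxi(0)|\leq 1$ on the bounded multiplier, and apply Lemma \ref{l1} bond by bond with the $L^2$-tail estimate as in \eqref{l2_tail}. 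Since $\ell/n = \sqrt\tau$, the residual scales as $\gamma^2\sqrt\tau$, and multiplying by the outer $\tau$ from Feynman--Kac gives $-\gamma\delta + C\gamma^2\tau^{3/2}$; our choice of $\gamma$ then produces the exponent $-C\delta^2/\tau^{3/2}$ claimed.

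For \eqref{putting_annoying_term_back}, take $V = \pm\theta_0 n\psi$: no cancellation is attempted, the variational sup becomes $\sup_f\{(1\pm\gamma\theta_0)\langle n\psi,f\rangle - \epsilon_0 n^2\mc D(\sqrt f)\}$, and now I estimate the first inner product via \eqref{blastoise} with $\beta$ tuned to absorb the Dirichlet form. This leaves a bounded residual of size $C(1+|\gamma\theta_0|)^2/\epsilon_0$. With $\gamma = 1/\theta_0$ (so $|1\pm\gamma\theta_0|\leq 2$) and $\tau\leq T$ bounded, the log-probability is at most $-\delta^2\epsilon_0/(2\tau^{3/2}) + C'$, and the elementary inequality $e^{-x}\leq 1/x$ turns this Gaussian-type exponent into the polynomial bound $C''\tau^{3/2}/\delta^2$ stated in the lemma.

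The step I expect to require the most care is the book-keeping in the telescoping estimate for part (i): one has to check that the residual produced by Lemma \ref{l1}, summed over the $\ell = \lfloor n\sqrt\tau\rfloor$ bonds of the mesoscopic box, scales exactly as $\ell/n = \sqrt\tau$ (with no spurious $\log\ell$ factor), so that the exponent $3/2$ in the target rate $\delta^2/\tau^{3/2}$ comes out correctly. The bounded multiplier $\xxi(0)$ should be essentially harmless, since its support meets the telescoping chain only at one endpoint and contributes a negligible boundary correction.
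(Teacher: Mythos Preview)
Your argument for \eqref{bellsprout} is the paper's: exponential Chebyshev, Feynman--Kac, then \eqref{squirtle}; the choice $\gamma\theta_0=1$ kills the $\langle n\psi,f\rangle$ term, and the telescoping estimate via Lemma~\ref{l1} gives (in the paper's notation, with $\theta$ playing the role of your $\gamma$) the residual $n\ell\theta^2/(\epsilon_0 n^2)=\theta^2\sqrt\tau/\epsilon_0$ recorded just after \eqref{weepinbell}. No logarithmic factor appears, so your scaling concern is answered. Your caveat about the multiplier $\xxi(0)$ at the two bonds touching the origin is a real technical point that the paper also leaves implicit; the clean fix is to route the average to the side of the origin away from the multiplier, exactly as is done with $\varphi_\epsilon$ versus $\tilde\varphi_\epsilon$ in the proof of Theorem~\ref{t4}.

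For \eqref{putting_annoying_term_back} you and the paper diverge. The paper applies ordinary Markov to reduce to the first-moment bound $\bb E_n\bigl|\int_t^{t+\tau} n\psi(\xi_r^n)\,dr\bigr|=O(1)$, then uses the entropy estimate Theorem~\ref{t3} to shift the starting time to $0$ before controlling the exponential moment via Feynman--Kac and Lemma~\ref{lema_fundamental}. You instead stay inside the exponential-Chebyshev framework: apply Feynman--Kac and \eqref{squirtle} directly, absorb $(1\pm\gamma\theta_0)\langle n\psi,f\rangle$ into the Dirichlet form via \eqref{blastoise}, and downgrade the resulting exponential tail to the polynomial one through $e^{-x}\le 1/x$. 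Both routes are valid; yours is more self-contained and, notably, does not invoke Theorem~\ref{t3}, whereas the paper's route makes explicit where that entropy bound earns its keep.
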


\noindent\emph{Proof of Lemma \ref{removing_annoying_term}: }
Let $\theta >0$. Apply three inequalities: first, Markov's inequality, $\bb P[X> \delta]\leq e^{-\theta\delta} \bb E[e^{\theta X}]$; second, Feynman-Kac's inequality, Proposition \ref{Feynman-Kac}; finally, the bound \eqref{squirtle}. We conclude that \eqref{bellsprout} is bounded by

\begin{equation}\label{oddish}
-\theta \delta + \tau\cdot \sup_f\left\{\lrb{\,
\xxi_0(\xxi_1 - \xxi^\ell), \theta \sqrt n f} -\epsilon_0 n^2 \mc D(\sqrt f) + (1 - \theta \theta_0)\lrb{\psi,nf}
\right\},
\end{equation}
where the supremum is taken over the set of probability densities with respect to $\nu_{\rho}$. Recall that $\xxi^{\ell} = \frac{1}{\ell}(\xxi_{-1}+\cdots + \xxi_{-\ell})$. 
Writing
\begin{equation}
\begin{split}
\xxi_1 - \xxi^\ell &= (\xi_1 - \xi_0) + (\xi_0 - \xi_{-1})+ \frac{\ell - 1}{\ell}
(\xi_{-1}-\xi_{-2})+\cdots + \frac{1}{\ell}(\xi_{-\ell + 1}-\xi_{-\ell}) \\
\end{split}
\end{equation}
and applying Lemma \ref{l1}, we find that, for any $\gamma >0$, 
\begin{equation}\label{weepinbell}
\lrb{\,
\xxi_0(\xxi_1 - \xxi^\ell), \theta \sqrt n f} \leq \gamma\mc D(\sqrt f)+ \frac{n \ell\, \theta^2}{\gamma}.
\end{equation}

%
%
%

Going back to \eqref{oddish}, choose $\gamma=\epsilon_0 n^2$ and $\theta_0 = \theta^{-1}$. Recall that $\ell = n\sqrt \tau$. Then \eqref{oddish} is bounded by $-\theta\delta + \frac{\theta^2\tau^{3/2}}{\epsilon_0}$. We can choose $\theta = \delta\epsilon_0/2\tau^{3/2}$. This proves the first inequality in the statement of Lemma \ref{putting_annoying_term_back}.

\bigskip

With this choice of $ \theta_0 $, the second inequality can be written as 

\begin{equation}\label{key}
\PP_n\lrp{\Big|
\int_t^{t+\tau}	n\psi(\xi^n_s)\,ds
\Big|>\delta} 
\leq
\frac{C\theta_0}{\delta }, 
\end{equation}
and for that it is enough to show
\begin{equation}
\bb E_n\lrc{\Big|\int_t^{t+\tau} n\,\psi(\xi^n_s)\,ds\Big|} = O(1).
\end{equation}

The entropy inequality gives the bound

\begin{equation}\label{charizard}
H_n(t) + \log\bb E\left[\exp\Big|\int_0^{\tau} n\,\psi(\xi^n_s)\,ds
\Big|
\right].
\end{equation}
This estimate is implicit in the proof of the entropy bound. Apply four inequalities in sucession; first, the entropy bound (Theorem \ref{t3}); second, $e^{|a|}\leq e^a + e^{-a}$; third, Feynman-Kac's inequality (Propostion \ref{Feynman-Kac}); finally, Lemma \ref{lema_fundamental}.

\section{Limit Points of the Additive Functional}\label{sec_characterization}

In the previous section we proved that the sequence of additive
functionals 
\begin{equation}\label{key}
\lrch{A^n_t:=\int_0^t\sqrt n (\omega(\xi^n_s)-v(\rho))\,ds:t\in [0,T]}_{n\in\NN}
\end{equation}
is tight. In this section we identify
its limit points, in Proposition \ref{characterization}. For that we will rely strongly on the results of \cite{slafips}.

By the Replacement Lemma \ref{t4} we can approximate $ A^n_t $ by the additive functional $ v'(\rho)\sqrt{n}\int_0^t \xi_s^n \star \varphi_\epsilon\,ds $. Following \cite{slafips}, we relate this functional to the density fluctuation field of the underlying particle system. 
One can write 
\begin{equation}\label{key}
\sqrt n\int_0^t \xi_s^n \star \varphi_\epsilon\,ds = \int_0^t X^n_s\lrp{
	\tau_{-x^n_s/n}\varphi_\epsilon}\,ds.
\end{equation}
By Theorem \ref{lln}, the rescaled random walk $\frac{x^n_s}{n}$ converges to a deterministic trajectory, $ \{v(\rho)t:t\in[0,T]\} $. Because of that, we expect the integral $\int_0^t\sqrt{n}(\omega(\xi^n_s)-v(\rho))\,ds$ to behave like $v'(\rho)\int_0^t X^n_s(\tau_{-v(\rho)s}\varphi_\epsilon)\,ds$. The scaling limit of this last process is defined using the following result:

\begin{theorem}\label{zepsilon}
	Let $ \mc Y_t $ denote the stationary solution of the Ornstein-Uhlenbeck equation with drift $ a \neq 0 $:
	
	\begin{equation}\label{ou_with_drift}
	d\mc Y_t:=D(\rho)\Delta \mc Y_t\,dt + a\nabla \mc Y_t\,dt + \sqrt{2D(\rho)\chi(\rho)}d\nabla \dot{\mc W_t}.
	\end{equation}
	For $ \epsilon\in (0,1) $, let $ i_\epsilon(u) =\epsilon^{-1}\ind{(0,\epsilon]} $ and let $\{ \mathcal Z^\epsilon_t:t\in [0,T]\} $ be the process defined by 
	\begin{equation}\label{occupation_origin}
	\mathcal Z^\epsilon_t:=\int_0^t \mc Y_s(i_\epsilon)\,ds.
	\end{equation}
	Then the sequence of processes $ \{\mathcal Z^{\epsilon} \}_{\epsilon > 0} $ converges in the uniform topology of $ C([0,T];\RR) $ to a  Gaussian process $ \{\mathcal Z_t:t\in [0,T]\} $ of stationary increments and variance
	\begin{equation}\label{key}
	\EE[\mathcal Z_t^2]= D(\rho)\chi(\rho)\sqrt{\frac{2}{\pi}}\int_0^t\frac{(t-s)e^{-\frac{a^2}{2}s}}{\sqrt s}\,ds.
	\end{equation}
	
	The same statement holds if $ i_\epsilon $ is replaced by a smooth function $ \varphi_\epsilon $ with support contained in $ (0,\epsilon) $.
	
\end{theorem}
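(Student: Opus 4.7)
The plan is to adapt the strategy of \cite[Theorem~6.3]{slafips}, which establishes the corresponding result in the driftless case $a=0$, to accommodate the advection term $a\nabla \mc Y_t$. Because $\mc Y_t$ is a Gaussian random field and $\mathcal Z^\epsilon_t$ is for each $t$ a continuous linear functional of $\{\mc Y_s:s\leq t\}$, the process $(\mathcal Z^\epsilon_t)_{t\in[0,T]}$ is centered and Gaussian. Convergence of finite-dimensional distributions therefore reduces to convergence of the covariance $C^\epsilon(s,t):=\EE[\mathcal Z^\epsilon_s\mathcal Z^\epsilon_t]$, and once this is secured, tightness in $C([0,T];\RR)$ follows from the Gaussian hypercontractive bound $\EE[(\mathcal Z^\epsilon_t-\mathcal Z^\epsilon_s)^{2k}]\leq c_k\bigl(\EE[(\mathcal Z^\epsilon_t-\mathcal Z^\epsilon_s)^2]\bigr)^k$ together with Proposition~\ref{kolmogorov_centov}.

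For the covariance computation I would pass to the frequency domain. Writing the stationary solution of \eqref{ou_with_drift} as a stochastic convolution, a standard computation yields
\[
\EE[\mc Y_u(f)\mc Y_v(g)]=\chi(\rho)\int \widehat f(k)\,\overline{\widehat g(k)}\,e^{iak(u-v)-D(\rho)k^2|u-v|}\,dk.
\]
Substituting $f=g=i_\epsilon$ and integrating in $u,v\in[0,t]$ expresses $\EE[(\mathcal Z^\epsilon_t)^2]$ as a triple integral. The explicit Gaussian evaluation $\int e^{iaks-D(\rho)k^2 s}\,dk=\sqrt{\pi/(D(\rho)s)}\,e^{-a^2 s/(4D(\rho))}$ combined with the change of variables $s=|u-v|$ reduces this to a one-dimensional integral in $s$. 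Using $|\widehat{i_\epsilon}(k)|\leq 1$ and $\widehat{i_\epsilon}(k)\to 1$ pointwise, dominated convergence then delivers the claimed limiting variance (up to absorption of constants into $D$ and $\chi$). The same computation with distinct endpoints identifies the limiting covariance and hence the law of $(\mathcal Z_t)_{t\in[0,T]}$. The extension to a smooth $\varphi_\epsilon$ supported in $(0,\epsilon)$ is identical, since only $\widehat{\varphi_\epsilon}(0)=\int\varphi_\epsilon=1$ survives in the limit.

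For tightness, it suffices to prove $\EE[(\mathcal Z^\epsilon_t-\mathcal Z^\epsilon_s)^2]\leq C|t-s|^{3/2}$ uniformly in $\epsilon$, which comes out of the same Fourier computation applied to the increment. By Gaussianity, all even moments of the increment are controlled by the corresponding powers of the variance, so for $k$ large enough Proposition~\ref{kolmogorov_centov} supplies equicontinuity; combined with finite-dimensional convergence this yields convergence in $C([0,T];\RR)$.

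The main obstacle is a careful handling of the $s\to 0^+$ singularity $s^{-1/2}$ in the limiting integrand: this is what is responsible for the anomalous scaling and for $\mathcal Z_t$ failing to be a semimartingale, and it is the hard part of \cite[Theorem~6.3]{slafips}. The drift $a\neq 0$ only improves integrability at large $s$ through the factor $e^{-a^2 s/(4D(\rho))}$, so no new difficulty appears there, and most of the argument can be imported from \cite{slafips}, with the Gaussian-integral evaluation in the presence of drift being the only genuine modification.
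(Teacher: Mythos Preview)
The paper does not prove this theorem at all: immediately after the statement it simply writes ``This theorem corresponds to Theorem 6.3 of \cite{slafips}. The extension to general approximations of the identity $\varphi_\epsilon$ is trivial, so we do not discuss it here.'' Your proposal therefore goes strictly beyond what the paper does, by actually sketching the argument that \cite{slafips} carries out and indicating how the drift term modifies it.

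Your sketch is sound and is indeed in the spirit of \cite{slafips}: Gaussianity reduces convergence of finite-dimensional distributions to convergence of covariances, the covariance of the stationary solution of \eqref{ou_with_drift} has an explicit Fourier representation, and tightness follows from a uniform $|t-s|^{3/2}$ bound on the variance of increments combined with Gaussian hypercontractivity and Kolmogorov--Centsov. One minor point: your Gaussian integral produces $e^{-a^2 s/(4D(\rho))}$ rather than the $e^{-a^2 s/2}$ appearing in the statement, so the constants in the displayed variance formula are only recovered after a suitable normalization (or, as you note, absorption into $D$ and $\chi$); this is cosmetic and does not affect the argument.
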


This theorem corresponds to Theorem 6.3 of \cite{slafips}. The extension to general approximations of the identity $\varphi_\epsilon$ is trivial, so we do not discuss it here.

Now we have all the definitions needed to characterize the limit points of the additive funtional $ A^n_t $.

\begin{proposition}\label{characterization}
	Let $\{A_t:t\in [0,T]\}$ be a limit point of the sequence $A^n$, defined in \eqref{gato}. Let $\mathcal Z^\epsilon$, $\mathcal Z$ be the processes defined in Theorem \ref{zepsilon}, with $a=v(\rho)$. Then $A$ and $v'(\rho)\mathcal Z$ have the same finite-dimensional distributions.
\end{proposition}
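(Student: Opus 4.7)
The plan is to identify any limit point $A$ by passing to the limit first in $n$ and then in $\epsilon$ through a chain of approximations, each of which is controlled by a result already in the paper.

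First, by the Replacement Lemma (Theorem \ref{t4}) applied to $\phi = \omega$ (so that $\bar\phi(\rho)=v(\rho)$ and $\bar\phi{\,}'(\rho)=v'(\rho)$), for every $t\in[0,T]$ and every $\delta>0$,
\[
\limsup_{\epsilon\to 0}\limsup_{n\to\infty}\bb P_n\lrp{\Big|A_t^n - v'(\rho)\sqrt n\int_0^t (\xi_s^n\star \varphi_\epsilon)(0)\,ds\Big|>\delta}=0.
\]
So it suffices to identify the finite-dimensional limits of $A^{n,\epsilon}_t := v'(\rho)\sqrt n\int_0^t (\xi_s^n\star \varphi_\epsilon)(0)\,ds$ as $n\to\infty$ then $\epsilon\to 0$. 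Using $\xi_s^n(x)=\eta_s^n(x+x_s^n)$ and the definition \eqref{fluctuation_field} of $X_t^n$, a direct computation gives
\[
\sqrt n\,(\xi_s^n\star \varphi_\epsilon)(0) = X_s^n\lrp{\tau_{-x_s^n/n}\varphi_\epsilon},
\]
so $A^{n,\epsilon}_t = v'(\rho)\int_0^t X_s^n(\tau_{-x_s^n/n}\varphi_\epsilon)\,ds$.

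Next I would replace the random centering $x_s^n/n$ by the deterministic trajectory $v(\rho)s$. By Proposition \ref{lln}, $x_s^n/n\to v(\rho)s$ uniformly on $[0,T]$ in probability, and since $\varphi_\epsilon\in\mc S(\bb R)$, the map $u\mapsto \tau_u\varphi_\epsilon$ is continuous into $H_2(\bb R)$, with modulus depending on $\epsilon$ but not on $n$. Combined with the fact that $X_s^n$ is bounded in $H_{-2}(\bb R)$ (in the sense of finite second moments, which follows at equilibrium from the product-measure variance computation), one gets
\[
\lim_{n\to\infty}\bb E_n\lrc{\sup_{t\le T}\Big|\int_0^t X_s^n\big(\tau_{-x_s^n/n}\varphi_\epsilon - \tau_{-v(\rho)s}\varphi_\epsilon\big)\,ds\Big|}=0.
\]
Therefore $A^{n,\epsilon}_t$ has the same limit points as $v'(\rho)\int_0^t X_s^n(\tau_{-v(\rho)s}\varphi_\epsilon)\,ds$.

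Then I would pass to the limit in $n$ using Proposition \ref{p2}: $X^n\Rightarrow X$ in $\mc D([0,T],H_{-2}(\bb R))$, where $X$ solves \eqref{ou}. The functional $(X_\cdot)\mapsto \int_0^\cdot X_s(\tau_{-v(\rho)s}\varphi_\epsilon)\,ds$ is continuous from $\mc D([0,T],H_{-2})$ to $C([0,T],\bb R)$ on trajectories that do not charge the (null) set of discontinuities of $X$, so by the continuous mapping theorem the $n$-limit equals $v'(\rho)\int_0^t X_s(\tau_{-v(\rho)s}\varphi_\epsilon)\,ds$ in law. Now perform the Galilean change of frame: setting $\mc Y_t(f):=X_t(\tau_{-v(\rho)t}f)$, a direct computation with \eqref{ou} (the drift comes from differentiating $\tau_{-v(\rho)t}$ in $t$) shows that $\mc Y$ is the stationary solution of \eqref{ou_with_drift} with $a=v(\rho)$. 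Hence the $n$-limit is exactly $v'(\rho)\int_0^t \mc Y_s(\varphi_\epsilon)\,ds = v'(\rho)\mc Z^\epsilon_t$ (for the smooth-$\varphi_\epsilon$ version of Theorem \ref{zepsilon}). Finally, sending $\epsilon\to 0$ and using Theorem \ref{zepsilon}, $v'(\rho)\mc Z^\epsilon\to v'(\rho)\mc Z$ uniformly on $[0,T]$. Matching the two iterated limits (in $n$ then $\epsilon$) against the $\epsilon$-error from the Replacement Lemma identifies the finite-dimensional distributions of $A$ with those of $v'(\rho)\mc Z$.

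The main obstacle is the Galilean change of frame combined with the loss of regularity at $\epsilon\to 0$: the test function $\tau_{-v(\rho)s}\varphi_\epsilon$ has a norm in $H_2(\bb R)$ that blows up as $\epsilon\to 0$, so one cannot simply pass to the $\epsilon\to 0$ limit inside the $n\to\infty$ step. The iteration order (first $n$, then $\epsilon$) is essential, and the Replacement Lemma is exactly what justifies exchanging the two limits; the fact that it is stated with the same iterated-$\limsup$ order is what makes the chain close.
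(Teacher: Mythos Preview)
Your proposal is correct and follows essentially the same route as the paper: rewrite $A^{n,\epsilon}$ as a functional of the fluctuation field with random centering, use the law of large numbers for $x^n/n$ together with the convergence of $X^n$ to identify the $n\to\infty$ limit as $v'(\rho)\mc Z^\epsilon$, perform the Galilean change of frame (this is Lemma \ref{moving_fluct_field} in the paper), then invoke Theorem \ref{zepsilon} for $\epsilon\to 0$, with the Replacement Lemma closing the double limit. The only organizational difference is that the paper handles the passage to the limit in one stroke via the continuous mapping theorem applied to the joint functional $F_t(\mc X,x)=\int_0^t \mc X_s(\tau_{x_s}\varphi_\epsilon)\,ds$ on $\mc D([0,T];H_{-2}(\bb R))\times \mc D([0,T];\bb R)$ (joint convergence being free since the second marginal has a deterministic limit), whereas you first replace the random centering $x_s^n/n$ by $v(\rho)s$ using a moment estimate and then apply the continuous mapping theorem to $X^n$ alone; both executions are valid and equivalent in spirit.
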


We begin with a lemma that allows us to write $\mathcal Z^\epsilon$ in a more convenient way.

\begin{lemma}\label{moving_fluct_field}
	Let $ X $ be the stationary solution of the Ornstein-Uhlenbeck equation \eqref{ou} and $ v(\rho) $ be as in \eqref{asymptotic_speed}. Denote $ \tau_x f(u):=f(x+u) $. Then the process $\{\mc Y_t:t\in [0,T]\}$ defined by 
	\begin{equation*}
	\mc Y_t(f):=X_t(\tau_{-v(\rho)t}f)
	\end{equation*}
	is a solution of the Ornstein-Uhlenbeck equation with drift \eqref{ou_with_drift}, with drift $ a= v(\rho) $.
\end{lemma}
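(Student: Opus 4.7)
The plan is to show directly that $\mc Y_t$ satisfies the distributional SPDE \eqref{ou_with_drift} with drift $a = v(\rho)$, by testing against an arbitrary $f \in \mc S(\RR)$. Set $g_t := \tau_{-v(\rho)t}f$, so that $g_t(u) = f(u - v(\rho)t)$; two shift-intertwining identities will drive the computation:
\[
\partial_t g_t = -v(\rho)\,\nabla g_t, \qquad \Delta g_t = \tau_{-v(\rho)t}\Delta f, \qquad \nabla g_t = \tau_{-v(\rho)t}\nabla f.
\]

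First I would apply Itô's formula to $\mc Y_t(f) = X_t(g_t)$, accounting for the deterministic time-dependence of the test function via an extra drift term, $d\mc Y_t(f) = dX_t(g_t) + X_t(\partial_t g_t)\,dt$. Substituting the SPDE for $X_t$ from Proposition \ref{p2} and using the intertwining identities, this becomes
\[
d\mc Y_t(f) = D(\rho)\,\mc Y_t(\Delta f)\,dt - v(\rho)\,\mc Y_t(\nabla f)\,dt + d\mc M_t(g_t),
\]
where $\mc M_t(g_t) = -\sqrt{2D(\rho)\chi(\rho)}\int_0^t\int_{\RR}\nabla f(u - v(\rho)s)\,\mc W(du,ds)$. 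Under the distributional convention $\nabla\mc Y_t(f) = -\mc Y_t(\nabla f)$, the drift terms match exactly those of \eqref{ou_with_drift} with $a = v(\rho)$.

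Next I would identify the noise. Since the Galilean shift $(u,s)\mapsto(u+v(\rho)s,s)$ preserves Lebesgue measure, the pushforward $\tilde{\mc W}$ of $\mc W$ is again a space-time white noise; a change of variables then yields
\[
\mc M_t(g_t) = -\sqrt{2D(\rho)\chi(\rho)}\int_0^t\int_{\RR}\nabla f(u)\,\tilde{\mc W}(du,ds),
\]
which is precisely the noise of \eqref{ou_with_drift} tested against $f$, with $\mc W$ replaced by the equivalent white noise $\tilde{\mc W}$. Finally, since $\mc Y_0 = X_0$ has the Bernoulli fluctuation-field law and this Gaussian measure is also invariant under \eqref{ou_with_drift} (the drift $v(\rho)\nabla$ being antisymmetric with respect to it), $\mc Y$ is the stationary solution of \eqref{ou_with_drift} with $a = v(\rho)$. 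The only subtle point is the Itô differential $dX_t(g_t)$ with time-dependent test function and the identification of $\mc M_t(g_t)$ with a noise of the target form; both follow from standard Itô calculus for linear SPDEs and the translation invariance of space-time white noise.
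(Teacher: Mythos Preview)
Your proof is correct and follows essentially the same route as the paper's: both compute the differential of $\mc Y_t(f)=X_t(\tau_{-v(\rho)t}f)$ with a time-dependent test function, pick up the extra $-v(\rho)\nabla$ drift from $\partial_t g_t$, and then use translation invariance to identify the noise. The only cosmetic difference is that the paper phrases everything through the martingale problem---it checks that $M_t(H)=\mc Y_t(H_t)-\mc Y_0(H_0)-\int_0^t\mc Y_s((\partial_s+D(\rho)\Delta+v(\rho)\nabla)H_s)\,ds$ is a martingale with quadratic variation $\int_0^t 2D(\rho)\chi(\rho)\|\nabla H_s\|^2\,ds$, using $\|\nabla(\tau_{-v(\rho)s}H_s)\|_{L^2}=\|\nabla H_s\|_{L^2}$---whereas you work at the level of the It\^o differential and explicitly build a new white noise $\tilde{\mc W}$ via the Galilean shift. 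The paper's formulation is slightly more economical (no need to construct $\tilde{\mc W}$, only to match quadratic variations), but the content is the same.
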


\begin{proof}
	The proof is a simple computation. We would like to show that, for any sufficiently smooth $H: [0,T]\times \RR \to \RR$, the process $ \{M_t(H): t\in [0,T]\}$ defined by
	
	\begin{equation*}
	M_t(H):=\mc Y_t(H_t)- \mc Y_0(H_0) - \int_0^t\mc Y_s((\partial_s + D(\rho)\Delta + v(\rho)\nabla)H_s)\,ds
	\end{equation*}
	is a martingale with quadratic variation 
	\begin{equation*}
	\lrch{\int_0^t 2 D(\rho) \chi(\rho) ||\nabla H_s||^2_{L^2(\RR)}\,ds:t\in[0,T]}.
	\end{equation*}
	Substituting the definition of $ \mc Y $ in the formula for the martingale, we find
	
	\begin{equation*}
	M_t(H)=X_t(\tau_{-v(\rho)t}H_t)-X_0(H_0)-\int_0^t X_s((\partial_s +D(\rho) \Delta)\tau_{-v(\rho)s}H_s)\,ds.
	\end{equation*}

	Since $X$ solves the Ornstein-Uhlenbeck equation without drift \eqref{ou}, the expression above is a martingale with quadratic variation
	
	\begin{equation*}
	\begin{aligned}
	\lrb{ M_t(H)} &= \int_0^t 2D(\rho)\chi(\rho) ||\nabla(\tau_{-v(\rho)s}H_s)||^2_{L^2(\RR)}\,ds \\
	&= \int_0^t2D(\rho)\chi(\rho) ||\nabla(H_s)||^2_{L^2(\RR)}\,ds,
	\end{aligned}
	\end{equation*}
	as we wanted to show.
\end{proof}

\begin{proof}[Proof of Proposition \ref{characterization}]
	Let $X^n$ denote the density fluctuation field associated to the lattice-gas process, defined in \eqref{fluctuation_field}, and $X$ its limit. For each $ \epsilon > 0 $, let $ \varphi_\epsilon $ be a nonnegative smooth function with support in $ (0,\epsilon) $.   Consider the auxiliary processes $\{A^{n,\epsilon}_t:t\in[0,T]\}$  defined by
	
	\begin{equation}\label{averaged_additive_functional}
	A^{n,\epsilon}_t :=v'(\rho)\int_0^tX^n_s(\tau_{-x^n_s/n}\varphi_\epsilon)\,ds.
	\end{equation}
	
	First, we claim that $ A^{n,\epsilon} $ converges, as first $ n\to\infty $ then $ \epsilon\to 0 $, to $v'(\rho) \mathcal Z $, in the sense of finite-dimensional distributions. To prove this claim, we 
	put together  Proposition \ref{p2} (fluctuations of the lattice-gas) and Theorem \ref{lln} (Law of Large Numbers for the random walk) and see that $ A^{n,\epsilon} $ converges, as $ n\to \infty $, to the process
	
	\begin{equation}\label{occupation_rw}
	\lrch{
		v'(\rho)\mathcal Z^{\epsilon}_t :=v'(\rho)\int_0^{t}X_s\lrp{\tau_{-v(\rho)s}\varphi_{\epsilon}}\,ds: t\in [0,T]}
	\end{equation}
	in the sense of finite-dimensional distributions. Using Lemma \ref{moving_fluct_field} and Theorem \ref{zepsilon}, we see that 
	$ \mathcal Z^{\epsilon} $ converges, as $ \epsilon\to 0 $, to $ \mathcal Z $. 
	More details for the first assertion:
	consider, for each $t\in[0,T]$, the function $  F_t:\mc D([0,T];H_{-2}(\RR))\times \mc D([0,T];\RR) \to \RR$ given by $F_t(\mathcal X, x):= \int_0^t \mathcal X_s(\tau_{x_s}\varphi_\epsilon)\,ds$. 
	Notice that all trajectories in 
	$\mc C([0,T]; H_{-2}(\RR))\times \mc C ([0,T];\RR) $ are 
	continuity points for $F_t$; it is in order to ensure this continuity that we average with a smooth approximation of the identity. We can 
	write $A^{n,\epsilon}_t= v'(\rho)F_t(X^n,-x^n/n)$. 
	Using the continuity of $F_t$ and the 
	convergences of $X^n$ and $x^n/n$ we conclude 
	that $A^{n,\epsilon}_t$ converges weakly to 
	$\mathcal Z^\epsilon_t$. In the same 
	way, we can prove convergence of arbitrary
	finite-dimensional distributions.

	Second, we claim that the processes $ A^n $ and $ A^{n,\epsilon} $ have the same limit points, as first $ n\to \infty $ then $ \epsilon\to 0 $. This follows from Theorem \ref{t4}. In more detail:
	fixing $\epsilon >0$ and $t>0$, the sequence $(A^n_t, A^{n,\epsilon}_t)_{n\in\NN}$ is tight in $\RR^2$ and its limit points
	$(A_t, v'(\rho)\mathcal Z^\epsilon_t)$ satisfy $\EE|A_t-v'(\rho)\mathcal{Z}^\epsilon_t|\leq \varlimsup_{n}\EE|A^n_t - A^{n,\epsilon}_t|$. In the same way, the family $(A_t, \mathcal{Z}^\epsilon_t)_{\epsilon > 0}$ is tight in $\RR^2$, and its limit points $(A_t, v'(\rho)\mathcal Z_t)$ satisfy $\EE|A_t - v'(\rho)\mathcal Z_t|\leq \varlimsup_\epsilon |A_t - v'(\rho)\mathcal Z^\epsilon_t|$. By Theorem \ref{t4}, $\EE|A_t - v'(\rho)\mathcal Z_t|=0$. This shows that the processes $A$ and $v'(\rho)\mathcal Z$ have the same marginals. An analogous (but notationally more cumbersome) argument takes care of arbitrary finite-dimensional distributions.
\end{proof}

\section{Asymptotic Independence}

Our starting point in the study of the random walk was to write down the decomposition \eqref{decomposition}.  The position of the (centered and scaled) random walk, $\frac{x^n_t - v(\rho)nt}{\sqrt{n}}$, is a sum of a martingale $M^n_t$ and an additive functional $A^n_t$. We proved that the martingale part converges to Brownian motion and that the additive functional converges to a Gaussian process with stationary increments. In this section we show that these limiting processes are independent, or, putting it more precisely, that the sequence of random vectors $(M^n,A^n)$ converges in law to a product measure on $(\mathcal{C}([0,T], \RR))^2$. 

First we tackle the problem of proving that $M_t$ is independent of $A_t$ for each $t\in [0,T]$. In view of the Replacement Lemma \ref{t4} and the Law of Large Numbers \ref{lln} , we can try to approximate $A_t$ by the additive functional
\begin{equation}\label{asymptotic_additive_functional}
v'(\rho)\sqrt n \int_0^t(\eta_s^n \star \varphi_\epsilon)(v(\rho)s)\,ds.
\end{equation}

The functional \eqref{asymptotic_additive_functional} depends only on the environment. Our strategy to prove asymptotic independence of the processes $ M $ and $ A $ is to construct a martingale $( N^n_{s,t})_{ s\leq t}$ such that $N^n_{t,t}$ approximates the integral in \eqref{asymptotic_additive_functional}. This martingale will be a function of the environment process alone, and therefore it will never jump at the same time as the walker. Besides, $M^n$ jumps only when the walker jumps, so the martingales $( M^n_{s})_{ s\leq t}$ and $( N^n_{s,t})_{ s\leq t}$ will be orthogonal. If in addition the quadratic variation $(\lrb{N^n_{s,t}})_{s \leq t}$ converges to an increasing function of $ s $, we can apply the Martingale FCLT to conclude that $\{(M^n_s, N^n_{s,t}):s\leq t\}_{n\in\NN}$ converges to a pair of independent continuous martingales $M$ and $N$. In particular, $M_t$ is independent of $N_{t,t}=A_t$.

\begin{lemma}\label{independent_marginals}
	Let $(M, A)$ be a limit point of the sequence $(M^n,A^n)$ and $t\in [0,T]$. Then $M_t$ is independent of $A_t$.
\end{lemma}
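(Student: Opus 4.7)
The plan follows the outline given in the paragraph preceding the lemma: replace $A^n_t$ with a functional of the environment alone, express that functional as the terminal value of a martingale orthogonal to $M^n$, and then transfer independence to the limit via Proposition \ref{p1}.

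\textbf{Reduction and martingale construction.} By the Replacement Lemma \ref{t4} combined with Proposition \ref{lln}, after first $n \to \infty$ and then $\epsilon \to 0$, $A^n_t$ is approximated in probability by
\[
\widetilde A^{n,\epsilon}_t := v'(\rho)\sqrt n \int_0^t (\eta^n_s \star \varphi_\epsilon)(\lfloor v(\rho) s n\rfloor)\,ds =: \int_0^t F^n_s(\eta^n_s)\,ds,
\]
which is measurable with respect to $\sigma(\eta^n_r: r \leq t)$. Fixing $\epsilon > 0$ and $t \in [0,T]$, let $u^n(s,\eta)$ solve the backward Kolmogorov equation $(\partial_s + n^2 L_b) u^n = -F^n_s$ with $u^n(t,\cdot) \equiv 0$, i.e. $u^n(s,\eta) = \bb E[\int_s^t F^n_r(\eta^n_r)\,dr \mid \eta^n_s = \eta]$. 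Then by Dynkin's formula,
\[
N^n_{s,t} := u^n(s,\eta^n_s) - u^n(0,\eta^n_0) + \int_0^s F^n_r(\eta^n_r)\,dr
\]
is a square-integrable martingale (in $s$) with respect to the environment filtration, and $N^n_{t,t} = \widetilde A^{n,\epsilon}_t - u^n(0,\eta^n_0)$.

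\textbf{Orthogonality and the Martingale FCLT.} Since $N^n$ is built from the environment trajectory alone, its jumps occur at the Poisson clocks of the exchange dynamics, whereas $M^n$ jumps only at the walker's clocks; these two families of clocks are independent, so almost surely $\Delta M^n_r \, \Delta N^n_{r,t} = 0$ for every $r \leq t$. As both martingales are purely discontinuous, this yields $\langle M^n, N^n(\cdot,t) \rangle \equiv 0$. Combined with the quadratic-variation convergence already established for $M^n$ in Theorem \ref{t2}(i), Proposition \ref{p1} applied to the pair $(M^n_{\cdot}, N^n_{\cdot,t})$ delivers joint convergence to a pair of \emph{independent} continuous martingales, provided the predictable bracket $\langle N^n(\cdot,t)\rangle_s = \int_0^s n^2 \Gamma_b\bigl(u^n(r,\cdot)\bigr)(\eta^n_r)\,dr$ converges to a deterministic increasing limit and the jumps of $N^n$ vanish uniformly.

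\textbf{Passage to the limit.} Granted the MFCLT above, $M_t$ is independent of $\lim_{n} N^n_{t,t}$. Since $u^n(0,\eta^n_0)$ is deterministic up to a fluctuation controlled by the entropy bound of Theorem \ref{entropy_bound} (and anyway does not affect independence), $\lim_n N^n_{t,t}$ differs from $\lim_n \widetilde A^{n,\epsilon}_t$ only by a constant shift, and letting $\epsilon \to 0$ and invoking Theorem \ref{t4} one more time identifies the resulting limit with $A_t$ in probability. Independence is preserved under limits in probability, completing the proof.

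\textbf{Main obstacle.} The subtle step is verifying convergence of $\langle N^n(\cdot,t)\rangle_s$ to a deterministic limit. This amounts to solving a Poisson-type equation for $L_b$ with source $F^n_s$ and controlling the carré-du-champ of its solution on a diffusive scale; this is precisely the type of analysis carried out in \cite{slafips} for the weakly asymmetric occupation-time problem, and in the present setting it must be combined with the sharp entropy estimate of Theorem \ref{entropy_bound} to compensate for the fact that the law of $\eta^n_s$ under $\bb P_n$ is not product.
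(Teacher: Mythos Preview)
Your overall architecture is the same as the paper's: approximate $A^n_t$ by an environment-only functional, write it as the terminal value of a martingale in the environment filtration orthogonal to $M^n$, and invoke Proposition~\ref{p1}. The gap you flag as the ``main obstacle'' is real, however, and with your construction it cannot be closed with the tools available. You take $u^n$ to solve the full backward Kolmogorov equation $(\partial_s+n^2L_b)u^n=-F^n_s$ on the configuration space $\Omega$; for a general speed-change rate $c(\cdot)$ this $u^n$ has no usable form, and neither \cite{slafips} nor Theorem~\ref{entropy_bound} provides an estimate on $\Gamma_b(u^n)$ that would force $\langle N^n(\cdot,t)\rangle_s$ to a deterministic limit. (A secondary slip: $u^n(0,\eta^n_0)$ is a genuine $\sigma(\eta^n_0)$-measurable random variable, not ``deterministic up to a fluctuation''; the entropy bound says nothing about it.)

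The paper circumvents all of this by working one level up, with the fluctuation field rather than the generator. It chooses a smooth scalar function $H^\epsilon:[0,t]\times\RR\to\RR$ solving the backward heat equation $\partial_sH^\epsilon+\partial_{uu}H^\epsilon=\varphi_\epsilon(v(\rho)s+\cdot)$ with $H^\epsilon(t,\cdot)=0$, and forms the Dynkin martingale
\[
N^{n,\epsilon}_{s,t}=-X^n_s(H^\epsilon_s)+\int_0^s X^n_r\bigl((\partial_r+\Delta_n)H^\epsilon_r\bigr)\,dr .
\]
Because this is \emph{linear} in $\eta$, the quadratic variation is explicit, $\int_0^s n^{-1}\sum_x n^2\bigl(H^\epsilon_r(\tfrac{x+1}{n})-H^\epsilon_r(\tfrac{x}{n})\bigr)^2(\eta^n_r(x{+}1)-\eta^n_r(x))^2\,dr$, and a direct mean--variance computation under the stationary law $\nu_\rho$ shows it converges in probability to the deterministic $\int_0^s 2\rho(1-\rho)\|\partial_uH^\epsilon_r\|_{L^2}^2\,dr$; no Poisson equation for $L_b$ and no entropy input are needed. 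Since $H^\epsilon(t,\cdot)=0$, the terminal value $N^{n,\epsilon}_{t,t}$ equals $\int_0^t X^n_r((\partial_r+\Delta_n)H^\epsilon_r)\,dr$, which differs from the target only through the $O(n^{-1})$ error $\Delta_n-\partial_{uu}$ and the replacement of $x^n_s/n$ by $v(\rho)s$ already handled by Proposition~\ref{lln}. The trick, in short, is to solve a one-dimensional PDE for the test function instead of an infinite-dimensional Kolmogorov equation for the observable.
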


\begin{proof}
	%
	Let $ \epsilon >0 $. Recall the definition of the additive functional $ A^{n,\epsilon} $ in \eqref{averaged_additive_functional}. For the purposes of the present lemma, we can assume that $ v'(\rho)=1 $ without loss of generality. To build a martingale that is close to $ A^{n,\epsilon} $ at time $ t $, we write down a Dynkin martingale with the following test function: let $H^\epsilon:[0,t]\times \RR$ be the solution of 
	
	\begin{equation*}
	\left\{
	\begin{array}{rlll}
	\partial_sH^\epsilon(s,u)+\partial_{uu}H^\epsilon(s,u) &=& \varphi_\epsilon(v(\rho)s + u) & \mbox{ for all } s\in [0,t], u\in\RR \\
	H^\epsilon(t,u) &=& 0 & \mbox{ for all }u\in\RR.
	\end{array}
	\right.
	\end{equation*}
	
	Let $\Delta_n f(u):=n^2[f(u+n^{-1})+f(u-n^{-1})-2f(u)]$.  Then 
	
	\begin{equation}\label{bivalued_martingale}
	N^{n,\epsilon}_{s,t}:=-\YY^n_s(H^\epsilon_s)+\int_0^s \YY^n_r((\partial_r+\Delta_n)H^\epsilon_r)\,dr,
	\end{equation}
	defined for $ s\leq t $, is a martingale with quadratic variation
	
	\begin{equation*}
	\lrb{N^{n,\epsilon}_{s,t}}= \int_0^s \frac{1}{n}\sum_{x\in\ZZ} n^2 \left(H^\epsilon_r\left(\frac{x+1}{n}\right)-H^\epsilon_r\left(\frac{x}{n}\right)
	\right)^2(\eta^n_r(x+1)-\eta^n_r(x))^2\,dr.
	\end{equation*}
	
	Notice that the above martingale does not start at zero. 	
	By the Law of Large Numbers \eqref{lln}, the difference $ N^{n,\epsilon}_{t,t} - A^{n,\epsilon}_t $ converges to zero in probability, as $ n\to\infty $. Here is one of the points in the proof where we need the smoothness of $\varphi_\epsilon$, for this ensures smoothness of $H^\epsilon$, and therefore a $O(n^{-1})$ error in the approximation of $\partial_{uu} H^\epsilon$ by $\Delta_n$. The proof of the lemma will be complete when we show that the sequence of random vectors $ \{ (M^n_t, N^{n,\epsilon}_{t,t}):n\in \NN, \epsilon >0 \} $ converges in law, as first $ n\to \infty $ then $ \epsilon \to 0 $, to a random vector of independent marginals.
	
	With this goal in mind and looking for an opportunity to apply the MFCLT, we claim that
	\begin{equation*}
	\lim_{n\to\infty} \lrb{ N^{n,\epsilon}_{s,t} } = \int_0^s 2\rho(1-\rho)||\partial_u H^\epsilon_r(u)||^2_{L^2(\RR)}\,dr \mbox{   in probability.}
	\end{equation*}
	
	To compute this limit, we start by setting 
	\begin{equation}\label{auxiliary_integral}
	f_r(x,n):= \int_{x/n}^{(x+1)/n}(\partial_u H^\epsilon_r(u))^2\,du.
	\end{equation}
	Using a Taylor expansion for $ H_r $ and the fact that $ H_r $ has compact support, it is possible to show that $ \lrb{ N^{n,\epsilon}_{s,t} } $ has the same limit as
	\begin{equation}\label{key}
	\int_0^s \sum_{x\in \ZZ} f_r(x,n)(\eta^n_r(x+1)-\eta^n_r(x))^2\,dr.
	\end{equation}
	To replace $(\eta^n_r(x+1)-\eta^n_r(x))^2$ by its mean $2\rho(1-\rho)$, one can explore the fact that if a sequence of random variables $ X_n $ satisfies $ \EE X_n \to 0 $ and $ \mbox{Var}X_n \to 0$ then $ X_n \to 0 $ in probability. To estimate the variance, we use Cauchy-Schwarz inequality and stationarity:
	
	\begin{equation}\label{variance_quad_var}
	\begin{aligned}
	&\EE_n \lrc{\lrp{\int_0^s
			f_r(x,n)(\eta^n_r(x+1)-\eta^n_r(x))^2\,dr}^2
	} \\
	\leq\quad & 2\rho(1-\rho) s\cdot \int_0^s \sum_{x\in\ZZ} f_r(x-1,n)f_r(x,n) + f_r(x+1,n)f_r(x,n)\,dr.
	\end{aligned}
	\end{equation}
	Recall that $ H^\epsilon_r $ has compact support for all $ r \in [0,t]$, so that only a finite number of terms in the sum above are not null. From the definition of $ f_r $, \eqref{auxiliary_integral}, we see that each $ f_r(x,n) $ is of order $ n^{-1} $, so that the variance in \eqref{variance_quad_var} does indeed converge to $ 0 $ as $ n\to\infty $.
	
	Using the Martingale FCLT, we see that $ \{(M^n_s, N^{n,\epsilon}_{s,t}):s\leq t\}_{n\in\NN} $ converges to a continuous Gaussian process $ \{(M_s, N^\epsilon_{s,t}):s\leq t\} $ with independent increments. Since $ M^n $ is orthogonal to $ N^{n,\epsilon} $, it follows that the limit has independent marginals, as we wanted to show. 
	
	
\end{proof}

Let $ 0\leq t_1<\cdots < t_k $. We finish the section by indicating how to prove that the finite-dimensional distributions $(M_{t_1},\ldots, M_{t_k})$ and $(A_{t_1},\ldots, A_{t_k})$ are independent. The proof builds upon the strategy used in Lemma \ref{independent_marginals}.

\begin{theorem}
	Let $(M, A)$ be a limit point of the sequence $(M^n,A^n)$. Let $0<t_1<\cdots <t_k \leq T$. Then  $(M_{t_1},\ldots, M_{t_k})$ and $(A_{t_1},\ldots, A_{t_k})$ are independent.
\end{theorem}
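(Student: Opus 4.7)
My plan is to extend the argument of Lemma \ref{independent_marginals} from a single time $t$ to the finite family $t_1 < \cdots < t_k$. The underlying idea is that, for each $t_i$, one can approximate the averaged additive functional $A^{n,\epsilon}_{t_i}$ by the terminal value of an environment-adapted martingale that is automatically orthogonal to $M^n$; applying a joint (vector-valued) Martingale Functional CLT to the resulting $(k+1)$-dimensional process will then force the first coordinate (the limit of $M^n$) to be independent of the remaining $k$.

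First I would solve, for each $i=1,\ldots,k$, the backward heat equation
\begin{equation*}
\partial_s H^{\epsilon,i}(s,u) + \partial_{uu} H^{\epsilon,i}(s,u) = \varphi_\epsilon(v(\rho)s + u), \qquad H^{\epsilon,i}(t_i,\cdot)\equiv 0,
\end{equation*}
on $[0,t_i]\times\RR$, exactly as in the proof of Lemma \ref{independent_marginals}. Setting
\begin{equation*}
N^{n,\epsilon,i}_s := -X^n_{s\wedge t_i}(H^{\epsilon,i}_{s\wedge t_i}) + \int_0^{s\wedge t_i} X^n_r\big((\partial_r + \Delta_n) H^{\epsilon,i}_r\big)\,dr, \qquad s\in [0,T],
\end{equation*}
yields a square-integrable martingale that depends only on the environment process $(\eta^n_s)_{s\geq 0}$. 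By the Law of Large Numbers for the walker (Proposition \ref{lln}) and a repetition of the argument in Lemma \ref{independent_marginals}, one has $N^{n,\epsilon,i}_{t_i} - A^{n,\epsilon}_{t_i} \to 0$ in probability as $n\to\infty$, and by the Replacement Lemma (Theorem \ref{t4}) the sequence $A^{n,\epsilon}_{t_i}$ in turn approximates $A^n_{t_i}$ as first $n\to\infty$ and then $\epsilon\to 0$.

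Next I would apply the multidimensional version of Proposition \ref{p1} (see Chapter VIII.3 of \cite{jacod2013limit}) to the vector martingale
\begin{equation*}
\mc M^{n,\epsilon}_s := \big(M^n_s,\, N^{n,\epsilon,1}_s,\, \ldots,\, N^{n,\epsilon,k}_s\big), \qquad s\in[0,T].
\end{equation*}
The four items to verify are: the diagonal bracket convergence (for $M^n$ this is Theorem \ref{t2}(i), and for each $N^{n,\epsilon,i}$ the computation of Lemma \ref{independent_marginals} applies verbatim up to stopping at $t_i$); the off-diagonal brackets $\lrb{M^n, N^{n,\epsilon,i}}$, which vanish identically because walker jumps and environment jumps are almost surely disjoint; the off-diagonal brackets $\lrb{N^{n,\epsilon,i}, N^{n,\epsilon,j}}$, which by polarisation reduce to the computation already carried out in Lemma \ref{independent_marginals} with $(\partial_u H^{\epsilon,i})^2$ replaced by $\partial_u H^{\epsilon,i}\,\partial_u H^{\epsilon,j}$; and the vanishing of the largest jump, since every coordinate has jumps of order $n^{-1/2}$. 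The conclusion of the MFCLT is that $\mc M^{n,\epsilon}$ converges in law to a continuous Gaussian martingale whose first coordinate is independent of the other $k$. Passing first $n\to\infty$ and then $\epsilon\to 0$, and using the approximations recorded in the previous paragraph, the random vectors $(M^n_{t_1},\ldots, M^n_{t_k})$ and $(A^n_{t_1},\ldots, A^n_{t_k})$ converge jointly to a limit whose two blocks are independent, which is exactly the statement.

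The step I expect to be delicate is the verification of the off-diagonal bracket convergence $\lrb{N^{n,\epsilon,i}, N^{n,\epsilon,j}}_s \to \int_0^{s\wedge t_i\wedge t_j} 2\rho(1-\rho) \langle \partial_u H^{\epsilon,i}_r, \partial_u H^{\epsilon,j}_r\rangle_{L^2(\RR)}\,dr$ in probability: one must control, uniformly in $n$, the variance of the cross-term analogous to \eqref{variance_quad_var} in the presence of two different test functions of compact (but distinct) support. This is a straightforward extension of the single-index calculation using Cauchy-Schwarz and stationarity, but it is the one place where the bookkeeping is nontrivial.
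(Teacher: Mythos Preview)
Your proposal is correct and follows essentially the same strategy as the paper: build, for each $t_i$, an environment-only Dynkin martingale $N^{n,\epsilon,i}$ orthogonal to $M^n$, check bracket convergence, and apply the MFCLT. The only organizational difference is that the paper first reduces via characteristic functions to showing that $a_1 M_{t_1}+\cdots+a_k M_{t_k}$ is independent of $b_1\mathcal Z^\epsilon_{t_1}+\cdots+b_k\mathcal Z^\epsilon_{t_k}$ and then applies a two-dimensional MFCLT to $(M^n_r,\, \sum_i b_i N^{n,\epsilon,i}_r)$, whereas you invoke the $(k{+}1)$-dimensional MFCLT directly; the underlying computations (including the cross-bracket one you flag as delicate) are the same.
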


\begin{proof}
	To simplify the notation, let us do the case with just two times $s$ and $t$, with $s<t$.  Assume also that $ v'(\rho)=1 $, without loss of generality. It is enough to show that $(M_s,M_t)$ is independent of $\mathcal (Z^\epsilon_s,\mathcal Z^\epsilon_t)$, for each $ \epsilon > 0 $.
	
	\noindent \textbf{Step 1: }Using characteristic functions, we see that it suffices to prove that $a_1M_s+a_2M_t$ is independent of $b_1 \mathcal Z^\epsilon_s + b_2 \mathcal Z^\epsilon _t$ for any $a_1, a_2, b_1, b_2 \in \RR$. 
	
	\noindent \textbf{Step 2: }Define the Dynkin martingales $\{N^n_{r,s}: r\leq s\}$ and $\{N^n_{r,t}:r\leq t\}$ as in \eqref{bivalued_martingale} (notice that the test function $H^\epsilon$ in used in \eqref{bivalued_martingale} depends on $t$). 
	
	Notice that 
	\begin{equation}
	\lrch{b_1N^n_{r\wedge s,s}+b_2N^n_{r,t} :r\leq t }
	\end{equation}
	is also a Dynkin martingale. One can show, repeating the proof in Lemma \ref{independent_marginals},  that its quadratic variation converges,  as $n\to\infty$, to an increasing function of $r$.
	
	\noindent \textbf{Step 3: } Using the Martingale FCLT, we see that the sequence
	\begin{equation}
	\{(M^n_r,b_1N^n_{r,s}+b_2N^n_{r,t}):r\leq t\}_{n\in\NN}
	\end{equation}
	converges weakly and the limit has independent marginals. In particular, $b_1\mathcal Z^\epsilon_s + b_2 \mathcal Z^\epsilon_t = b_1N^n_{t,s}+b_2N^n_{t,t}$ is independent of $M$.
\end{proof}

\appendix

\section{Variational inequalities}

In this section we prove variational inequalities relating the Dirichlet form $\mc D(\sqrt f)$ with various integrals of interest. We start with some definitions. Recall the definition of the Dirichlet form:
\[
\mc D(\sqrt f) = \sum_{x \in \bb Z} \int \big(\sqrt {f^{x,x+1}}-\sqrt{f}\big)^2\,d\nu_{\rho}
\]

We have the following result:

\begin{lemma}
	\label{l1}
	Let $f$ be a density with respect to $\nu_{\rho}$, that is, $f\geq 0$ and $\int f\,d\nu_{\rho}=1$. Fix $x\in \bb Z$ and $ \beta >0 $. Let $ g $ be a local function and let $ h $ be a bounded function such that $ h(\eta^{x,x+1})=h(\eta) $ for all $ \eta \in \Omega $. Then
	
	\begin{equation}\label{by_parts}
	\lrb{fh,g^{x,x+1}-g}\leq \beta\mc D^{x,x+1}(\sqrt f) + \frac{1}{\beta}\lrb{g^2 + (g^{x,x+1})^2,fh^2}.
	\end{equation}
	
\end{lemma}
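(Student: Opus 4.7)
The inequality is of the standard ``integration by parts'' flavor that is ubiquitous in Yau's relative entropy method, and I would follow that template closely.

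The plan is to first symmetrize the left-hand side via the flip $\eta\mapsto\eta^{x,x+1}$. Since $\nu_\rho$ is a product Bernoulli measure, it is invariant under this involution, and by hypothesis $h^{x,x+1}=h$, so a change of variables gives
\[
\int fh\,(g^{x,x+1}-g)\,d\nu_\rho=\int f^{x,x+1}h\,(g-g^{x,x+1})\,d\nu_\rho.
\]
Averaging the two expressions,
\[
\lrb{fh,g^{x,x+1}-g}=\frac{1}{2}\int h\,(f-f^{x,x+1})(g^{x,x+1}-g)\,d\nu_\rho.
\]

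Next I would factor $f-f^{x,x+1}=(\sqrt{f}-\sqrt{f^{x,x+1}})(\sqrt{f}+\sqrt{f^{x,x+1}})$ and apply Young's inequality $|ab|\le\alpha a^2+\frac{1}{4\alpha}b^2$ with
\[
a=\sqrt{f}-\sqrt{f^{x,x+1}},\qquad b=h(g^{x,x+1}-g)(\sqrt{f}+\sqrt{f^{x,x+1}}),
\]
choosing $\alpha$ so that the quadratic-variation term picks up the prescribed $\beta$. The $a^2$-term integrates to $\int(\sqrt{f^{x,x+1}}-\sqrt{f})^2\,d\nu_\rho=\mc D^{x,x+1}(\sqrt f)$, i.e.\ the single-bond piece of the Dirichlet form.

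For the $b^2$-term I would use the elementary bounds $(\sqrt{f}+\sqrt{f^{x,x+1}})^2\le 2(f+f^{x,x+1})$ and $(g^{x,x+1}-g)^2\le 2\bigl(g^2+(g^{x,x+1})^2\bigr)$, then invoke the flip-invariance of $\nu_\rho$ and of $h$ one more time to identify
\[
\int f^{x,x+1}h^2\bigl(g^2+(g^{x,x+1})^2\bigr)\,d\nu_\rho=\int fh^2\bigl((g^{x,x+1})^2+g^2\bigr)\,d\nu_\rho,
\]
collapsing both $f$ and $f^{x,x+1}$ contributions into a single $\lrb{g^2+(g^{x,x+1})^2,fh^2}$. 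Tuning the Young parameter then yields the stated inequality (in fact with a slightly better constant $\tfrac{1}{2\beta}$ in front of the second term, so $\tfrac{1}{\beta}$ is comfortably met).

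There is no genuine obstacle; the only thing to watch is bookkeeping of constants so that $\beta$ and $\tfrac{1}{\beta}$ come out as stated, and being careful that the hypothesis $h^{x,x+1}=h$ is used in exactly the two places where we perform the change of variables — once to symmetrize at the start and once to merge the two halves of the $b^2$-bound at the end.
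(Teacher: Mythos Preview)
Your proposal is correct and follows essentially the same route as the paper's proof: symmetrize via the flip $\eta\mapsto\eta^{x,x+1}$ to obtain $\tfrac12\int h(f-f^{x,x+1})(g^{x,x+1}-g)\,d\nu_\rho$, factor $f-f^{x,x+1}=(\sqrt f-\sqrt{f^{x,x+1}})(\sqrt f+\sqrt{f^{x,x+1}})$, apply a weighted Young/Cauchy--Schwarz inequality, and then use $(\sqrt f+\sqrt{f^{x,x+1}})^2\le 2(f+f^{x,x+1})$ together with a second change of variables to collapse the remainder into $\lrb{g^2+(g^{x,x+1})^2,fh^2}$. Your observation that the argument actually yields the sharper constant $\tfrac{1}{2\beta}$ is correct; the paper simply uses the cruder bound $A^2\le h^2(g^2+(g^{x,x+1})^2)$ in place of $A^2\le\tfrac12 h^2(g^2+(g^{x,x+1})^2)$ to arrive at the stated $\tfrac{1}{\beta}$.
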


\begin{proof}
	Since $\nu_\rho$ is invariant with respect to the change of variables $\xi\mapsto \xi^{x,x+1}$, we have
	
	\begin{equation}
	\lrb{f, g^{x,x+1}-g} = \frac{1}{2}\lrb{f - f^{x,x+1},g^{x,x+1}-g}.
	\end{equation}
	
	Write $A = \frac{1}{2}h(g^{x,x+1}-g)$, $B = f$ and $C = f^{x,x+1}$. We have that
	\[
	B - C = \big( \sqrt{B} -\sqrt{C}\big) \big(\sqrt{B} +\sqrt{C}\big),
	\]
	and using the weighted Cauchy-Schwartz inequality we get 
	\[
	A\big( B-C\big) \leq \beta  \big( \sqrt{B} -\sqrt{C}\big)^2 + \frac{A^2 \big(\sqrt{B} +\sqrt{C}\big)^2}{4\beta}.
	\]
	Notice that $(\sqrt{B}+\sqrt{C})^2 \leq 2(B+C)$, whence
	\begin{equation}
	\label{gallo}
	A\big( B-C\big) \leq \beta  \big( \sqrt{B} -\sqrt{C}\big)^2 + \frac{A^2 (B +C\big)}{2\beta}.
	\end{equation}
	Recall the definitions of $A,B$ and $C$. We have that $A^2 \leq h^2(g^2+(g^{x,x+1})^2)$. Integrating \eqref{gallo} with respect to $\nu_\rho$ we obtain the lemma.
\end{proof}


Inequality \eqref{by_parts} will be very helpful to get bounds on the exponential moments of the additive functionals of $\{\xi_t^n:t\geq 0\}$. More precisely, combining Theorem A1.7.2 and equation A3.1.1 of  \cite{kl}, we have the following:
\begin{proposition}
	\label{Feynman-Kac}
	Let $V: \Omega \to \bb R$ be a bounded function. Then,
	\[
	\log \bb E_n \lrc{ e^{\int_0^t V(\xi_s^n) ds}} \leq t \sup_{f} \lrch{ \lrb{V,f} + \lrb{\sqrt f, L_n \sqrt f}},
	\]
	where the supremum is over all densities $f$ with respect to $\nu_\rho$.
\end{proposition}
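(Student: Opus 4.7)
The plan is to attack this standard Feynman--Kac bound via an $L^2(\nu_\rho)$ energy estimate. The starting point is to introduce the Feynman--Kac function $u(t,\eta) := \bb E_\eta\bigl[e^{\int_0^t V(\xi_s^n)\,ds}\bigr]$; since $V$ is bounded, $u$ is well-defined, nonnegative, smooth in $t$, and satisfies the evolution equation $\partial_t u = (L_n + V)u$ with $u(0,\cdot)\equiv 1$. The quantity to estimate is precisely $\bb E_n\bigl[e^{\int_0^t V(\xi_s^n)\,ds}\bigr]= \int u(t,\cdot)\,d\nu_\rho$, so the goal becomes to show $\int u(t,\cdot)\,d\nu_\rho \leq e^{t\Psi(V)}$, where $\Psi(V)$ denotes the supremum on the right-hand side of the proposition.

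The key observation is that, rather than differentiating $\int u\,d\nu_\rho$ directly (this would require $\nu_\rho$ to be invariant under $L_n$, which fails here), one should differentiate the squared $L^2$-norm $\|u(t)\|_{L^2(\nu_\rho)}^2$. Using the evolution equation,
\[
\tfrac{d}{dt}\|u(t)\|_{L^2(\nu_\rho)}^2 = 2\lrb{u(t), L_n u(t)} + 2\lrb{V, u(t)^2}.
\]
Setting $g_t := u(t)/\|u(t)\|_{L^2(\nu_\rho)}$ so that $f_t := g_t^2$ is a $\nu_\rho$-density, and using $g_t \geq 0$, the right-hand side factors as $2\|u(t)\|_{L^2(\nu_\rho)}^2\bigl(\lrb{\sqrt{f_t}, L_n \sqrt{f_t}} + \lrb{V, f_t}\bigr)$, which by the very definition of the supremum is at most $2\|u(t)\|_{L^2(\nu_\rho)}^2\, \Psi(V)$. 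Gronwall then yields $\|u(t)\|^2_{L^2(\nu_\rho)} \leq e^{2t\Psi(V)}$, and a single Cauchy--Schwarz step, $\int u\,d\nu_\rho \leq \|u\|_{L^2(\nu_\rho)}\cdot \nu_\rho(\Omega)^{1/2} = \|u\|_{L^2(\nu_\rho)}$, concludes.

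The main obstacle I anticipate is the finicky analytic justification: rigorously verifying that $u(t,\cdot)$ lies in the domain of $L_n$ in $L^2(\nu_\rho)$, that $t \mapsto \|u(t)\|^2_{L^2(\nu_\rho)}$ is differentiable, and that one may interchange differentiation and integration. For bounded $V$ and local jump rates this is standard semigroup theory; the cleanest implementation is probably to approximate $V$ by truncations localised in a finite volume (where all sums in $L_n$ become finite and $u$ depends only on finitely many coordinates), prove the inequality in that regime, and then pass to the limit via dominated convergence. It is worth noting that the argument does \emph{not} require $\nu_\rho$ to be invariant under $L_n$, which is precisely what makes this bound applicable to the environment-as-seen-from-the-walker, whose true invariant measure is not $\nu_\rho$.
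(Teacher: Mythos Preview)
Your argument is correct and is precisely the standard proof that the paper defers to by citing Kipnis--Landim (Theorem A1.7.2 and equation A3.1.1); the paper does not spell out a proof of its own. The $L^2(\nu_\rho)$ energy estimate you describe---differentiate $\|u(t)\|^2_{L^2(\nu_\rho)}$ for the Feynman--Kac semigroup, normalise $u^2/\|u\|^2$ to a density, bound by the variational supremum, then Gronwall plus Cauchy--Schwarz---is exactly the argument in that reference, and your remark that invariance of $\nu_\rho$ under $L_n$ is not needed is both correct and the reason the bound applies to the environment process here.
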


Combining this proposition with Lemma \ref{lema_fundamental}, we get the following estimate:

\begin{lemma}
	\label{l2}
	Let $V: \Omega \to \bb R$ be a bounded function. Then there exists $ C>0 $ that does not depend on $ n $ such that, for all $ \beta >0 $,
	\begin{equation}
	\label{pera}
	\log \bb E_n\big[e^{ \int_0^t V(\xi_s^n) ds }\big] \leq t \sup_{f} \lrch{\lrb{V,f} +(\beta - \epsilon_0) n^2 \mc D\big(\sqrt f\big) + \frac{C}{\beta}}.
	\end{equation}
\end{lemma}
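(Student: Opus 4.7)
The statement is an immediate marriage of the Feynman--Kac inequality (Proposition \ref{Feynman-Kac}) with the two bounds packaged in Lemma \ref{lema_fundamental}. My plan is to carry out exactly this combination, taking care to choose the right scaling for the free parameter $\beta$ so that the coefficient of the Dirichlet form comes out to $(\beta - \epsilon_0)n^2$ as stated.

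First I would start from Proposition \ref{Feynman-Kac}, which reduces the problem to controlling the variational expression
\[
\sup_f \bigl\{ \lrb{V,f} + \lrb{\sqrt f, L_n \sqrt f} \bigr\}
\]
uniformly in densities $f$ with respect to $\nu_\rho$. The task is therefore to produce an upper bound for $\lrb{\sqrt f, L_n \sqrt f}$ that matches the right-hand side of \eqref{pera}.

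Next I would apply inequality \eqref{squirtle} of Lemma \ref{lema_fundamental} to get
\[
\lrb{\sqrt f, L_n \sqrt f} \leq -\epsilon_0\, n^2 \mc D(\sqrt f) + \lrb{\psi, n f}.
\]
The second term on the right is then controlled by inequality \eqref{blastoise} of the same lemma. Here I would apply \eqref{blastoise} with the parameter rescaled, replacing $\beta$ by $\beta n^2$; this yields
\[
\lrb{n\psi, f} \leq \beta n^2 \mc D(\sqrt f) + \frac{C}{\beta},
\]
with the constant $C$ depending only on $\psi$ (hence only on the rates $r_z$ and on $\mc R$), and in particular independent of $n$. Adding the two displays gives
\[
\lrb{\sqrt f, L_n \sqrt f} \leq (\beta - \epsilon_0)\, n^2 \mc D(\sqrt f) + \frac{C}{\beta},
\]
which, inserted into the Feynman--Kac bound, yields \eqref{pera}.

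There is no real obstacle here: the proof is essentially a bookkeeping combination of two already-proven estimates, and the only subtle point is the rescaling $\beta \mapsto \beta n^2$ in the application of \eqref{blastoise}, which is what produces the clean form $(\beta - \epsilon_0)n^2$ in front of $\mc D(\sqrt f)$ and leaves an $n$-independent residual $C/\beta$.
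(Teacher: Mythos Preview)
Your proof is correct and matches the paper's approach exactly: the paper simply states that Lemma \ref{l2} is obtained by ``combining this proposition [Proposition \ref{Feynman-Kac}] with Lemma \ref{lema_fundamental},'' and the only nontrivial step is precisely the rescaling $\beta \mapsto \beta n^2$ in \eqref{blastoise} that you identified.
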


\begin{lemma}[Hoeffding's Inequality]
	Let $ X $ be a mean-zero random variable taking values in the interval $ [a,b] $. Then
	\begin{equation}\label{key}
	\EE\lrc{e^{\theta X}}\leq e^{\frac{\theta^2 \sigma^2}{8}}.
	\end{equation}

\end{lemma}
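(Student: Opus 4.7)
The plan is to use the classical convexity plus Taylor argument for Hoeffding's lemma, reading $\sigma^2 = (b-a)^2$ (the standard interpretation in this form of the bound). First I would reduce to the case $a \leq 0 \leq b$: since $\EE[X]=0$ and $X \in [a,b]$, either $0$ lies in the interval, or else $X$ is almost surely constant and the inequality is trivial.

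The main steps are as follows. By convexity of $x \mapsto e^{\theta x}$ on $[a,b]$,
\[
e^{\theta x} \leq \frac{b-x}{b-a}\, e^{\theta a} + \frac{x-a}{b-a}\, e^{\theta b}.
\]
Taking expectations and using $\EE[X] = 0$, I obtain
\[
\EE\lrc{e^{\theta X}} \leq \frac{b}{b-a}\, e^{\theta a} + \frac{-a}{b-a}\, e^{\theta b}.
\]
Introducing $p := -a/(b-a) \in [0,1]$ and $u := \theta(b-a)$, the right-hand side rewrites as $e^{-pu}\lrp{1-p+pe^u} = e^{\phi(u)}$, where
\[
\phi(u) := -pu + \log\lrp{1-p+pe^u}.
\]

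To conclude, I would Taylor expand $\phi$ around $u=0$. A direct computation yields $\phi(0) = 0$, $\phi'(0) = 0$, and
\[
\phi''(u) = q(u)\lrp{1-q(u)}, \qquad q(u) := \frac{pe^u}{1-p+pe^u} \in [0,1].
\]
The point is that $q(1-q)$ is the variance of a Bernoulli random variable with success probability $q$, hence bounded by $1/4$. By Taylor's theorem with integral remainder, $\phi(u) \leq u^2/8$, so that
\[
\EE\lrc{e^{\theta X}} \leq e^{\phi(\theta(b-a))} \leq e^{\theta^2 (b-a)^2/8},
\]
which is the claimed bound.

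Being a classical statement, the argument presents no real obstacle. The only substantive step is the identification of $\phi''(u)$ with a Bernoulli variance, which is the cleanest route to the universal constant $1/8$ in the exponent.
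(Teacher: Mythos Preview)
Your proof is correct and is the standard convexity-plus-Taylor argument for Hoeffding's lemma, with the natural reading $\sigma^2 = (b-a)^2$. The paper itself does not supply a proof of this lemma; it is simply stated in the appendix as a classical fact, so there is nothing to compare against beyond noting that your argument is the textbook one.
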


\begin{lemma}[Subgaussianity]\label{subgaussianity}
	Let $ X $ be a random variable. If 
	\begin{equation}\label{key}
	\EE\lrc{e^{\theta X}}\leq e^{\frac{\theta^2\sigma^2}{2}}\mbox{ for all } \theta >0  
	\end{equation}
then
	\begin{equation}\label{key}
	\log \EE\lrc{e^{cX^2}}\leq 2c\sigma^2\mbox{ for all }c\in (0, \frac{\sigma^2}{2}).
	\end{equation}
\end{lemma}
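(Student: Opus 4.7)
The plan is to use the classical Gaussian-duality identity
\[
e^{a^{2}/2} \;=\; \frac{1}{\sqrt{2\pi}}\int_{\RR} e^{az-z^{2}/2}\,dz \;=\; \EE[e^{aZ}],\qquad Z\sim N(0,1),
\]
valid for every real $a$. Setting $a=X\sqrt{2c}$ and treating $X$ as fixed gives the pointwise representation $e^{cX^{2}}=\EE_{Z}[e^{\sqrt{2c}\,XZ}]$. Taking expectation over $X$ and applying Fubini (both integrands are nonnegative), one obtains
\[
\EE\bigl[e^{cX^{2}}\bigr]\;=\;\EE_{Z}\!\left[\,\EE_{X}\!\left[e^{(\sqrt{2c}\,Z)X}\right]\right].
\]

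Now I apply the subgaussian hypothesis to the inner expectation with $\theta=\sqrt{2c}\,Z$, yielding the $Z$-pointwise bound $\EE_{X}[e^{(\sqrt{2c}\,Z)X}]\le e^{c\sigma^{2}Z^{2}}$. A standard Gaussian moment-generating computation then gives
\[
\EE\bigl[e^{cX^{2}}\bigr]\;\le\;\EE_{Z}\bigl[e^{c\sigma^{2}Z^{2}}\bigr]\;=\;(1-2c\sigma^{2})^{-1/2},
\]
provided $2c\sigma^{2}<1$. Taking logarithms and using the elementary inequality $-\log(1-u)\le 2u$ valid for $u\in[0,\tfrac12]$, one concludes
\[
\log\EE\bigl[e^{cX^{2}}\bigr]\;\le\;-\tfrac{1}{2}\log(1-2c\sigma^{2})\;\le\;2c\sigma^{2},
\]
on the indicated range of $c$, as desired.

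The only small subtlety worth checking is that the hypothesis as stated gives control of $\EE[e^{\theta X}]$ for $\theta>0$ only, whereas $\sqrt{2c}\,Z$ takes both signs; this is harmless because one can decompose the outer $Z$-integral into $\{Z>0\}$ and $\{Z<0\}$ and use the subgaussian bound for $X$ on the positive half and for $-X$ on the negative half (in the applications of this lemma elsewhere in the paper, and as suggested by the preceding Hoeffding statement, the bound is naturally two-sided anyway, so this decomposition is essentially free). No deeper technical obstacle arises; once the Gaussian-duality trick is in place, the rest is a one-line calculation, and the main work is just verifying the admissible range of $c$ through the elementary log inequality.
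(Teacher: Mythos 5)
Your argument is correct, and since the paper states this lemma in the appendix without any proof, you are supplying a missing argument rather than reproducing one; the Gaussian-duality identity $e^{cX^2}=\EE_Z[e^{\sqrt{2c}\,XZ}]$ followed by Fubini, the subgaussian hypothesis at $\theta=\sqrt{2c}\,Z$, and the Gaussian moment computation $\EE_Z[e^{c\sigma^2Z^2}]=(1-2c\sigma^2)^{-1/2}$ is the standard route and each step checks out. Two remarks on how your proof interacts with the statement as printed. First, your chain of inequalities actually establishes the conclusion under the condition $2c\sigma^2\le\tfrac12$, i.e.\ $c\le\tfrac{1}{4\sigma^2}$ (this is where $-\log(1-u)\le 2u$ is valid), not on the printed range $c\in(0,\tfrac{\sigma^2}{2})$; the printed range is dimensionally inconsistent and is evidently a typo, and the condition your proof yields is exactly the one invoked where the lemma is used in the replacement lemma (``whenever $\tfrac{2\alpha\|\varphi\|_\infty}{\epsilon\sqrt n}$ is smaller than $\tfrac12$''). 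So your closing phrase ``on the indicated range of $c$'' should be read as referring to $2c\sigma^2\le\tfrac12$ rather than to the range literally written in the lemma. Second, your observation that the hypothesis must really be two-sided (control of $\EE[e^{\theta X}]$ for $\theta<0$ as well, equivalently the same bound for $-X$) is well taken and is genuinely needed: with a one-sided hypothesis only, the conclusion can fail (take $X=-Y$ with $Y\ge 0$ heavy-tailed), so this is again a defect of the statement that your decomposition over $\{Z>0\}$ and $\{Z<0\}$ correctly repairs, and it is harmless in the paper's application where the variable is a centered bounded sum satisfying Hoeffding's bound for all real $\theta$.
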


%
%
%
%
%
%
%
%
%
%
%
%
%

\end{document}